\documentclass[review,3p,10pt]{elsarticle}
\biboptions{sort&compress}

\usepackage{graphicx}
\usepackage{epstopdf}
\usepackage{amsmath}
\usepackage{cases}
\usepackage[caption=false,font=footnotesize]{subfig}
\usepackage{fixltx2e}
\usepackage{amssymb}
\usepackage{mathtools}
\usepackage{bm}
\usepackage{multirow}
\usepackage{color}
\usepackage{xcolor}
\usepackage{algorithm}
\usepackage{algorithmic}
\usepackage{amsthm}
\usepackage{mathrsfs}
\usepackage{textcomp,mathcomp}
\usepackage{threeparttable}
\usepackage{bbding}
\usepackage{pifont}
\usepackage{multicol}
\usepackage{enumitem}
\usepackage{adjustbox}

\newtheorem{assumption}{Assumption}
\newtheorem{proposition}{Proposition}
\newtheorem{remark}{Remark}

\newcommand{\tabincell}[2]{\renewcommand\arraystretch{0.9}\begin{tabular}{@{}#1@{}}#2\end{tabular}}

\journal{Energy}

\begin{document}
\begin{frontmatter}

\title{Carbon-Driven Incentive Mechanism for Renewable Power-to-Ammonia Production in Coupled Carbon and Ammonia Markets}

\author[label1]{Yangjun~Zeng}
\author[label2]{Huayan~Geng\corref{cor1}}
\ead{genghuayan@swufe.edu.cn}
\author[label1]{Yiwei~Qiu\corref{cor2}}
\ead{ywqiu@scu.edu.cn}
\author[label2]{Xiuli~Sun}
\author[label1]{Liuchao~Xu}
\author[label3]{Jiarong~Li}
\author[label1]{Shi~Chen}
\author[label1]{Buxiang~Zhou}
\author[label1]{Kaigui~Xie}

\address[label1]{College of Electrical Engineering, Sichuan University, Chengdu, 610065, China}
\address[label2]{School of Statistics and Data Science, Southwestern University of Finance and Economics, Chengdu, 611130, China}
\address[label3]{Harvard John A. Paulson School of Engineering and Applied Sciences, Harvard University, Cambridge 02138, USA}
\cortext[cor1]{Corresponding author}
\cortext[cor2]{Corresponding author}

\begin{abstract}
  Renewable power-to-ammonia (ReP2A) production offers a promising pathway to decarbonize the power, transport and, chemical sectors, yet its competitiveness remains limited by high costs and fragmented carbon-policy frameworks. In particular, a unified mechanism that links ReP2A producers with fossil-based gray ammonia (GA) competitors in carbon and ammonia markets, while coordinating incentives among renewable generation, hydrogen production, and ammonia synthesis stakeholders in the ReP2A process chain, is still lacking.
  To address this gap, this paper proposes a hierarchical carbon-driven incentive mechanism (PCIM) that integrates carbon policy with multi-energy market interactions. A two-layer trading framework is developed, where ReP2A and GA compete in carbon allowance (CA) and ammonia markets (outer layer), while electricity and hydrogen transactions coordinate the ReP2A chain (inner layer). The resulting interactions are modeled as a hierarchical equilibrium, where the inner layer is reformulated as a tractable equivalent optimization problem, and the outer layer is solved as a mixed-integer linear program (MILP) derived from Karush-Kuhn-Tucker conditions.
  Based on equilibrium analysis, the carbon-related revenue of ReP2A is quantified, and a CA allocation mechanism (PCAM) is proposed to ensure individually rationality among stakeholders. Results show that the proposed mechanism reduces carbon emissions by 12.9\% with only a 1.8\% decrease in sector-wide revenue. Moreover, carbon pricing under the proposed framework redistributes profits between green and gray ammonia without reducing total welfare, and the PCAM further enhances stakeholders' willingness to participate in ReP2A production.
\end{abstract}

\begin{keyword}
  hydrogen energy, green ammonia, incentive mechanism, carbon trading, equilibrium, hierarchical game, individual rationality
\end{keyword}
\end{frontmatter}

\break
\section*{Nomenclature}

\addcontentsline{toc}{section}{Nomenclature}
\begin{multicols}{2}
	
	\footnotesize
	\setlength{\columnsep}{18pt}

\noindent\textbf{Abbreviations}
\begin{description}[leftmargin=2.2cm, labelwidth=2.0cm, labelsep=0.2cm, align=left, itemsep=0pt, topsep=0pt, font=\normalfont]
	\item[AE, ae] Alkaline electrolyzer
	\item[AST, ast] Ammonia storage
	\item[ASY, asy] Ammonia synthesis
	\item[BES, bes] Battery energy storage
	\item[CA] Carbon allowance
	\item[GA, ga] Gray ammonia stakeholder
	\item[HP, hp] Hydrogen production stakeholder
	\item[HST, hst] Hydrogen storage
	\item[PV, pv] Photovoltaic
	\item[RA, ra] Renewable ammonia stakeholder
	\item[RG, rg] Renewable power generation stakeholder
	\item[WT, wt] Wind turbine
\end{description}
\vspace{5pt}

\noindent\textbf{Indices}

\begin{description}[leftmargin=2.3cm, labelwidth=2.1cm, labelsep=0.2cm, align=left, itemsep=0pt, topsep=0pt, font=\normalfont]
	\addcontentsline{toc}{section}{Nomenclature}
	
	\item[$w,t$] Index for weeks and time intervals
	\item[$i, j, j'$] Index for buses
	\item[$ij$]   Index for the branch between buses $i$ and $j$
	\item[$m,n$]   Index for hydrogen nodes
	\item[$mn$]   Index for the pipelines between nodes $m$ and $n$
\end{description}

\noindent\textbf{Variables}

\noindent\textbf{1) Price-related variables}

\begin{description}[leftmargin=2.3cm, labelwidth=2.1cm, labelsep=0.2cm, align=left, itemsep=0pt, topsep=0pt, font=\normalfont]
	\addcontentsline{toc}{section}{Nomenclature}
	\item[$\rho_{t}^{\text{rg-hp/ra,e}}$] Electricity prices between RG and HP/RA
	\item[$\rho_{t}^{\text{hp-as,h}}$] Hydrogen price between HP and AS
	\item[$\rho_{}^{\text{CA}}$] CA price 
	\item[$\rho_{w}^{\text{am}}$] Ammonia price
\end{description}

\noindent\textbf{2) Variables related to RG}

\begin{description}[leftmargin=2.3cm, labelwidth=2.1cm, labelsep=0.2cm, align=left, itemsep=0pt, topsep=0pt, font=\normalfont]
	\addcontentsline{toc}{section}{Nomenclature}
	\item[$P_{t}^{\text{rg,wt}}, Q_t^{\text{rg,wt}}$]  Active and reactive power of WT \vspace{1pt}
	\item[$P_{t}^{\text{rg,pv}}, Q_t^{\text{rg,pv}}$]  Active and reactive power of PV \vspace{1pt}
	\item[$P_{t}^{\text{rg,wt/pv,curt}}$] Power curtailment of WT/PV
	\item[$P_{t}^{\text{rg,bes,c/d}}$] BES charging/discharging power in RG \vspace{1pt}
	\item[$P_{t}^{\text{rg,sell,hp/ra}}$] Power that RG sells to HP/RA \vspace{1pt}
	\item[$S^{\text{rg,bes}}_t,Q_{t}^{\text{rg,bes}}$] State and reactive power of BES in RG \vspace{1pt}
	\item[$P_{ij,t}, Q_{ij,t}$] Active/reactive power flows on branch $ij$
	\item[$\upsilon_{j,t}$] Square of the voltage amplitude at bus $j$
	\item[$q^{\text{rg}}$]  CA sold from RG to a GA
\end{description}

\noindent\textbf{3) Variables related to HP}

\begin{description}[style=standard, leftmargin=2.3cm, labelwidth=2.1cm, labelsep=0.2cm, align=left, itemsep=0pt, topsep=0pt, font=\normalfont]
	\addcontentsline{toc}{section}{Nomenclature}
	\item[$P_{t}^{\text{hp,buy,rg}}$] Power bought by HP from RG \vspace{1pt}
	\item[$P_{t}^{\text{hp,ae/comp}}$] Power of AE and hydrogen compressor \vspace{1pt}
	\item[$P_{t}^{\text{hp,bes,c/d}}$] BES charging/discharging power in HP \vspace{1pt}
	\item[$S^{\text{hp,bes}}_t,Q_{t}^{\text{hp,bes}}$]  State and reactive power of BES in HP \vspace{1pt}
	\item[$f^{\text{hp,pro}}_t$]  Hydrogen production rate\vspace{1pt}
	\item[$f^{\text{hp,sell,ra}}_t$]  Hydrogen sold from HP to RA\vspace{1pt}
	\item[$f^{\text{hp,hst,in/out}}_{t}$]  Hydrogen inflow/outflow of HST in HP
	\item[$F_{mn,t}$]  Average hydrogen flow of pipeline $mn$
	\item[$F_{mn,t}^{\text{in/out}}$]  Hydrogen inflow/outflow of pipeline $mn$
	\item[$p_{m,t}$]  Pressure at hydrogen node $m$
	\item[$LP_{mn,t}$]  Linepack storage of pipeline $mn$
	\item[$q^{\text{hp}}$] CA sold from HP to GA
\end{description}

\noindent\textbf{4) Variables related to RA}

\begin{description}[leftmargin=2.3cm, labelwidth=2.1cm, labelsep=0.2cm, align=left, itemsep=0pt, topsep=0pt, font=\normalfont]
	\addcontentsline{toc}{section}{Nomenclature}
	\item[$P_{t}^{\text{ra,asy}}$] Power consumption of ASY in RA
	\item[$P_{t}^{\text{ra,buy,rg}}$] Power bought by RA from RG
	\item[$P_{t}^{\text{ra,back}}$] Backup power for continuous operation of ASY
	\item[$f^{\text{ra,buy,hp}}_t$]  Hydrogen bought by AS from HP \vspace{1pt}
	\item[$f^{\text{ra,hst,in/out}}_{t}$]  Hydrogen inflow/outflow of HST in RA\vspace{1pt}
	\item[$f^{\text{ra,use}}_t$]  Hydrogen consumption for ASY
	\item[$S^{\text{ra,hst}}_t,S^{\text{ra,ast}}_w$]  States of HST and AST in RA \vspace{1pt}
	\item[$M^{\text{ra,pro}}_{t}$]  Ammonia flow rate of RA
	\item[$D^{\text{ra,sell}}_{w}$]  Ammonia sales volume of RA
	\item[$q^{\text{ra}}$]  CA sold from RA to GA
\end{description}

\noindent\textbf{5) Variables related to GA}

\begin{description}[leftmargin=2.3cm, labelwidth=2.1cm, labelsep=0.2cm, align=left, itemsep=0pt, topsep=0pt, font=\normalfont]
	\addcontentsline{toc}{section}{Nomenclature}
	\item[$M^{\text{ga,pro}}_{t}$] GA production rate
	\item[$D^{\text{ga,sell}}_{w}$] GA sales volume
	\item[$q^{\text{ga}}$]  CA purchased by GA from the ReP2A system
	\item[$q^{\text{emis}}_t$] Carbon emissions from GA
\end{description}
\vspace{5pt}

\noindent\textbf{Parameters}

\begin{description}[leftmargin=2.3cm, labelwidth=2.1cm, labelsep=0.2cm, align=left, itemsep=0pt, topsep=0pt, font=\normalfont]
	\addcontentsline{toc}{section}{Nomenclature}
	\item[$T, \tau, \Delta t$] Operational horizon, subhorizon and step length
	\item[$W^{\text{rg,wt/pv/bes}}$] WT/PV/BES installed capacities in RG
	\item[$W^{\text{hp,bes/ae/hst}}$] BES/AE/HST installed capacities in HP
	\item[$W^{\text{ra,hst/asy/ast}}$] HST/ASY/AST installed capacities in RA
	\item[$W^{\text{ga,asy}}$] ASY installed capacities in GA
	\item[$\rho^{\text{max}}, k^{\text{am}}$] Parameters in demand--price relationship
	\item[$c^{\text{ga}}, k^{\text{emis}}$] GA production cost and carbon-emission factor
	\item[$q^{\text{allo}},q^{\text{rewa}}$] Initial CA for the GA/ReP2A system
	\item[$P_{t}^{\text{rg,wt/pv,max}}$] Maximum power of WT/PV
	\item[$\eta^{\text{bes,c/d}}$] BES charging/discharging efficiencies
	\item[$\overline{\eta}^{\text{bes}},\underline{\eta}^{\text{bes}}$] State limits of BES
	\item[$\zeta^{\text{bes}},\sigma^{\text{deg}}$] BES self-discharge ratio and degradation cost
	\item[$\overline{\upsilon}_{j},\underline{\upsilon}_{j}$] Voltage magnitude limits at bus $j$
	\item[$\overline{\eta}^{\text{ae}}, \underline{\eta}^{\text{ae}}$] Power limits of hydrogen production plant
	\item[$\eta^{\text{p2h}}$] Energy conversion coefficient of AE
	\item[$\eta^{\text{comp}}$] Compressor power consumption coefficient
	\item[$\eta^{\text{h2a}},\eta^{\text{p2a}}$] ASY hydrogen/power consumption coefficient
	\item[$\overline{\eta}^{\text{hst}},\underline{\eta}^{\text{hst}}$] State limits of HST
	\item[$\overline{p}_{m},\underline{p}_{m}$] Limits of squared pressure at hydrogen node $m$
	\item[$\gamma$] Penalty factor for hydrogen pressure deviation
	\item[$K_{mn}^{\text{lp}},K_{mn}^{\text{gf}}$] Weymouth constants of pipeline $mn$
	\item[$\overline{\eta}^{\text{asy}},\underline{\eta}^{\text{asy}}$] Production rate limits of ASY
	\item[$\overline{r}^{\text{asy}},\underline{r}^{\text{asy}}$] Maximum ramping limits of ASY
	\item[$\rho_t^{\text{as,back}}$] Backup power cost in RA
\end{description}

\end{multicols}

\break
\section{Introduction}
\label{sec:intro}

\subsection{Background and motivation}
\label{sec:background}

Amid accelerating climate change, reducing carbon emissions has become a priority across sectors. Carbon-related policies \cite{2024china, chyong2025energy, 2025greenammonia, lyu2025cost} now support the transition of fossil fuel-based industries toward renewable energy. Renewable power-to-ammonia (ReP2A), which uses renewable electricity to produce hydrogen for ammonia synthesis, provides a promising pathway to decarbonize the power, chemical, and shipping sectors \cite{li2025redesigning, zeng2025planning, wu2025dispatchable, zhao2024role} and has attracted increasing interest.

Large-scale ReP2A projects are emerging worldwide, including in Saudi Arabia \cite{2025NEOM}, Denmark \cite{campion2023techno}, and China's Inner Mongolia \cite{2025siziwang} and Jilin \cite{zeng2025optimal}. Despite this progress, high capital and operating costs limit the competitiveness of green ammonia compared to fossil-based gray ammonia (GA) \cite{olabi2023recent, lee2024comparative}. For example, in regions of China with abundant renewable resources, green ammonia costs can reach about 3,600~CNY/t \cite{yu2024optimal}, compared with approximately 2,000~CNY/t for GA \cite{shin2025comparative}. To close this gap, authorities employ carbon taxes \cite{olsen2018optimal}, subsidies \cite{chyong2025energy}, and carbon allowance (CA) schemes \cite{2024beijing}.

The power sector typically applies a cap-and-trade mechanism \cite{lin2025energy, Chen2021Conjectural} to regulate total emissions by allocating tradeable CAs to producers. Producers receive initial allowances and can trade them to optimize production. Extending this mechanism to the ammonia sector, where carbon trading is still limited, could enhance green ammonia competitiveness while constraining emissions from GA. Due to high transportation costs, ammonia markets are regional and typically oligopolistic \cite{ellwanger2023cost}. In such settings, competition between green and gray ammonia, combined with carbon trading, directly affects production decisions, prices, and emissions.

Beyond the competition between green and gray ammonia,
ReP2A itself is a multi-stage chain consisting of renewable power generation (RG),
hydrogen production (HP), and renewable ammonia (RA) synthesis. These stakeholders engage in electricity and hydrogen transactions and may face conflicting interests \cite{zeng2025planning, yu2023optimal}. The overall interactions are shown in Fig.~\ref{fig:system}. The following three key questions arise: 
\begin{itemize}
	\item How do the carbon and ammonia markets interact?
	
	\item How are internal electricity and hydrogen transactions in the ReP2H chain linked to the external carbon and ammonia markets?
	
	\item More interestingly, because carbon-reduction benefits result jointly from RG, HP, and RA, how should carbon revenue be allocated to ensure individual rationality (IR)?
\end{itemize}

\subsection{Literature review}
\label{sec:review}

Growing interest in green ammonia has led to the extensive study of its decarbonization potential and economic viability.
Olabi \textit{et al.} \cite{olabi2023recent} reviewed recent progress and tradeoffs among production routes, identifying low efficiency and high cost as primary barriers.
Del~Pozo \textit{et al.} \cite{del2022techno} assessed green ammonia as an energy carrier and reported that low-cost renewable ammonia supports interregional trade-driven decarbonization.
Chyong \textit{et al.} \cite{chyong2025energy} evaluated low-carbon ammonia under subsidies and carbon pricing, showing that the flexibility of the Haber-Bosch process is the key factor.
Egerer \textit{et al.} \cite{egerer2023economics} developed a trade model and reported that high carbon prices are crucial for the competitiveness of green ammonia.
Overall, the economic viability of green ammonia depends on its carbon advantage.

\begin{table}[t]
	\centering
	\scriptsize
	\caption{Summary and comparison of energy transactions in related literature}\vspace{6pt}
	\label{tab:literature}
	\renewcommand{\arraystretch}{1.35} 
	\begin{tabular}{c@{\hspace{5pt}}c@{\hspace{5pt}}c@{\hspace{5pt}}c@{\hspace{5pt}}c @{\hspace{5pt}}c @{\hspace{5pt}}c@{\hspace{5pt}}c}
		\hline\hline
		\multirow{2}{*}{Literature} & \multicolumn{4}{c}{Transactions} & \multirow{2}{*}{Settlement} & \multirow{2}{*}{Operational horizon} & \multirow{2}{*}{Game model} \\
		\cline{2-5}
		& Electricty & Hydrogen & Ammonia & Carbon & & & \\
		\hline
		Zeng et al. \cite{zeng2025planning} & \checkmark & \checkmark & $\times$ & $\times$ & Hourly &  \tabincell{c}{Annual \\(12 typical weeks)} & Nash game \\
		
		Chen et al. \cite{Chen2021Conjectural} & \checkmark & $\times$ & $\times$ & \checkmark & \tabincell{c}{Hourly (elec.),\\ Annual (carbon)} & \tabincell{c}{Annual\\(typical scenarios)} & \tabincell{c}{Conjectural-variations\\equilibrium} \\
		
		Yu et al. \cite{yu2023optimal} & \checkmark & \checkmark & $\times$ & $\times$ & Hourly   &  \tabincell{c}{Annual\\(8760 hours)}   & \tabincell{c}{Cooperative\\operation}\\
		
		Wang et al. \cite{wang2025collaborative} & $\times$ & \checkmark & $\times$ & \checkmark & Hourly & Daily  & \tabincell{c}{Cooperative game,\\Stackelberg game} \\
		
		Shi et al. \cite{shi2026value} & \checkmark & \checkmark & $\times$ & \checkmark & - & Dynamic evolution & Evolutionary game \\
		
		Tostado et al. \cite{tostado2024local} & \checkmark & \checkmark & $\times$ & $\times$ & Hourly  & Daily  & Stackelberg game \\
		
		Wang et al.\cite{wang2025electricity} & \checkmark & $\times$ & $\times$ & \checkmark    & Hourly  & Daily  & \tabincell{c}{Coalitional game} \\
		
		Xiang et al. \cite{xiang2024carbon} & \checkmark & $\times$ & $\times$ & \checkmark & Hourly   & Daily  & \tabincell{c}{Nash bargaining} \\
		
		Zhou et al. \cite{zhou2025novel} & \checkmark & $\times$ & $\times$ & \checkmark & Hourly  & Daily  & Stackelberg game \\
		
		Zhou et al. \cite{zhou2024joint} & \checkmark & $\times$ & $\times$ & \checkmark & Hourly  & Daily  & Noncooperative game \\
		
		Mu et al. \cite{mu2023decentralized} & \checkmark & $\times$ & $\times$ & \checkmark   & Hourly  & Daily  & Cooperative game \\
		\hline
		\textbf{\tabincell{c}{This paper}} & \checkmark & \checkmark & \checkmark & \checkmark & \tabincell{c}{Hourly (elec./hyd.),\\ Weekly (amm.),\\Annual (carbon)} &\tabincell{c}{Annual\\ (12 typical weeks)} & \tabincell{c}{Nash game,\\Cournot game,\\cooperative allocation} \\
		\hline\hline
	\end{tabular}
\end{table}

Building upon this, hydrogen transaction, as a crucial intermediate link within the ReP2A chain, has been extensively investigated across transportation \cite{wang2025collaborative}, chemical \cite{shi2026value}, industrial \cite{tostado2024local}, and building \cite{wang2025electricity} sectors.
For example, Wang \textit{et al.} \cite{wang2025collaborative} investigated the cooperation and CA allocation mechanisms among gray, blue, and green hydrogen, as well as their multi-level interactions with hydrogen refueling stations and hydrogen fuel cell vehicles.
Shi \textit{et al.} \cite{shi2026value} studied the  strategic evolution of the renewable power-to-methanol value chain based on an evolutionary game, considering the coupling of electricity, hydrogen, and green certificate markets.
Tostado-V{\'e}liz \textit{et al.} \cite{tostado2024local} proposed a local electricity-hydrogen trading framework for industrial parks, achieving coordination and benefit improvement among subsystems through a Stackelberg game.
Wang \textit{et al.} \cite{wang2025electricity} developed a coalition game-based electricity-hydrogen-heat coupled sharing framework for multi-energy buildings, promoting energy sharing and reducing carbon emissions.

Meanwhile, carbon markets are becoming more mature and influential in energy systems.
Xiang \textit{et al.} \cite{xiang2024carbon} investigated nodal carbon pricing for coordinating prosumers through Nash bargaining,
and Zhou \textit{et al.} \cite{zhou2025novel} developed a four-layer
distributed game framework for microgrids.
To study interactions between carbon and electricity-gas markets, Chen \textit{et al.} \cite{Chen2021Conjectural} introduced a conjectural-variation equilibrium model, whereas Zhou \textit{et al.} \cite{zhou2024joint} applied reinforcement learning to clear the joint energy-carbon market.
Mu \textit{et al.} \cite{mu2023decentralized} proposed a decentralized electricity-CA trading framework. These studies focus mainly on power systems; in contrast, the ReP2A chain combines power, hydrogen, ammonia, and carbon, with stakeholders exhibiting heterogeneous flexibility, which remains unexplored. A comparative summary of the related literature is provided in Table \ref{tab:literature}.

Fair CA allocation and transactions are also critical for decarbonizing the chemical industry \cite{2024carbon}. Existing CA allocation methods include grandfathering and benchmarking \cite{zhang2024grandfather}. However, gray ammonia is not yet included in carbon markets, and emerging ReP2A projects complicate carbon management. Some policies \cite{2025greenammonia} now grant carbon credits for green ammonia and allow the credits to be traded in carbon markets, creating the need for coordinated carbon management across both industries.

Internal interactions among the stakeholders along the ReP2A chain have also been studied.
Yu \textit{et al.} \cite{yu2023optimal} proposed a sizing and pricing model ensuring collective and individual benefits among RG, HP, and RA stakeholders, while Zeng \textit{et al.} \cite{zeng2025planning} addressed multistakeholder conflicts through an equilibrium framework.
However, both neglect interactions between ReP2A and the gray ammonia markets.
When internal and external trading coexist, a two-level structure emerges in which electricity, hydrogen, carbon, and ammonia transactions mutually influence one another.
Designing an effective carbon transaction and allocation mechanism that enhances the competitiveness of green ammonia, limits emissions from gray ammonia production, and protects multistakeholder interests remains an open challenge.

\subsection{Research gaps and contributions}
\label{sec:contribution}
Based on the aforementioned literature review, despite the growing interest in green ammonia, several critical gaps remain:
\begin{enumerate}
	\item Existing studies focus on internal coordination within ReP2A systems, while neglecting competition with GA in coupled carbon and ammonia markets;
	
	\item 
	Carbon trading has not been integrated with the physical and economic interactions across power, hydrogen, and ammonia systems;
	
	\item  
	Incentive mechanisms that both improve green ammonia competitiveness and ensure IR among stakeholders are lacking.
	
\end{enumerate}

To fill these gaps, a hierarchical game-theoretical approach to characterize interactions between multistakeholder ReP2A and gray ammonia systems is developed in this paper. The main contributions are as follows:
\begin{enumerate}
	\item A two-level trading framework is proposed, in which CA and ammonia trading between ReP2A and GA form the outer level, while electricity and hydrogen trading among RG, HP, and RA form the inner level. The carbon market provides CA incentives for ReP2A production;
	
	\item  A hierarchical equilibrium model is formulated to capture cross-market interactions. The inner level is reformulated as a decomposable convex optimization problem \cite{zeng2025planning}, and the outer level is solved as a Nash-Cournot game via a mixed-integer linear program (MILP) derived from Karush-Kuhn-Tucker (KKT) conditions;
	
	\item An IR CA allocation mechanism (PCAM) is proposed for RG, HP, and RA stakeholders. Case studies show that improper allocation may reduce stakeholder profits, while the proposed mechanism preserves participation incentives.
\end{enumerate}


The remainder of this paper is organized as follows. Section~\ref{sec:structure} presents the system structure. Sections~\ref{sec:model} and \ref{sec:CAA} introduce the hierarchical game model and solution method, respectively. Section~\ref{sec:cases} reports simulation results, and Section~\ref{sec:conclusions} concludes this work.

\section{System structure and trading framework}
\label{sec:structure}

\begin{figure}[t]
	\centering
	\includegraphics[width=5.8in]{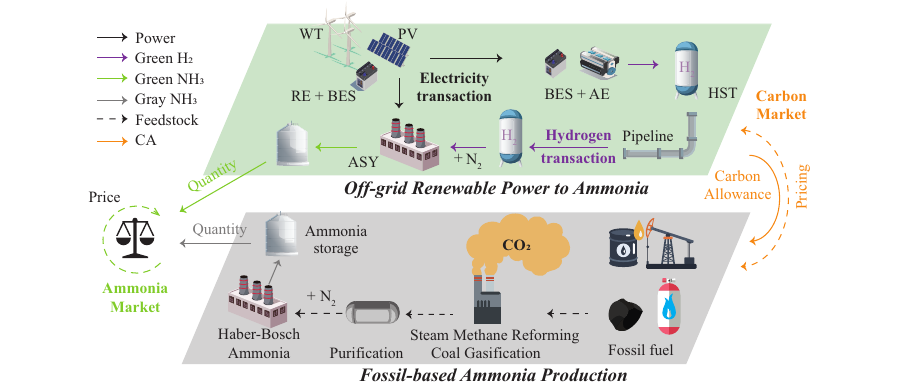}
	\caption{Structure and trading framework of ReP2A and gray ammonia systems}
	\label{fig:system}
\end{figure}

\subsection{System structure}

Fig.~\ref{fig:system} shows the integrated power-hydrogen-ammonia-carbon structure for ReP2A and fossil fuel-based gray ammonia systems.
ReP2A production follows a process chain consisting of renewable power generation, storage, transmission, power-to-hydrogen (P2H), hydrogen storage and delivery, and ammonia synthesis via the Haber-Bosch process \cite{zeng2025planning}.
In contrast, gray ammonia production relies on coal gasification or natural gas reforming, followed by hydrogen purification and ammonia synthesis.
We focus on off-grid ReP2A systems because they can satisfy green-certification requirements by complying with carbon-emission limits \cite{EU2023}, and thus, many real-world ReP2A projects are designed or operated in off-grid mode \cite{yu2024optimal, zhu2024exploring}.

The potential markets for green ammonia include the chemical-feedstock market and the shipping fuel market \cite{li2025redesigning,2025greenammonia}.
In the shipping-fuel market, green ammonia can command a green premium \cite{li2025redesigning,2025greenammonia}, i.e., enjoying a higher price, whereas gray ammonia cannot enter this market.
In contrast, in the chemical-feedstock market, there is currently no
green premium advantage; green and gray ammonia, being chemically identical, trade at the same price.
Therefore, this work focuses on the chemical-feedstock market, and our discussion follows on the following assumptions:

\begin{assumption}
	Green and gray ammonia enter the same chemical-feedstock market at the same price \footnote{Although green ammonia may command a ``green premium'' in emerging sectors (e.g., shipping), this paper focuses on the traditional chemical-feedstock market with identical pricing. Multi-market competition considering ``green premium'' in emerging sectors is left for future work.}.
\end{assumption}

\begin{assumption}
	Because ammonia has a limited economically viable transport range, the market is represented by one ReP2A producer and one GA producer competing locally \cite{ellwanger2023cost}.
\end{assumption}

\begin{assumption}
	Because ammonia logistics operate over long cycles, we assume that transactions settle weekly \cite{argus}.
\end{assumption}

\begin{figure}[t]
	\centering
	\includegraphics[width=5.3in]{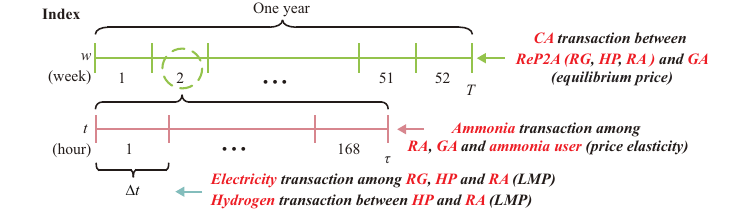}
	\caption{Time scales for carbon, ammonia, electricity, and hydrogen transactions}
	\label{fig:timescale}\vspace{-0pt}
\end{figure}

\subsection{Trading framework}

\subsubsection{Ammonia market}


In the ammonia market, RA and GA stakeholders sell to a representative user who engages in trading by choosing purchase quantities.
Their quantity decisions form Cournot competition \cite{Chen2021Conjectural, wu2023multi}, reflecting the imperfect relationship between price and demand.
The price elasticity is as follows:
\begin{align}
  \rho_w^{\text{am}}=&f(D_w^{\text{ga,sell}})=g(D_w^{\text{ra,sell}})
  =\rho^{\text{max}}-(D_w^{\text{ga,sell}}+D_w^{\text{ra,sell}})/k^{\text{am}}. \label{eq:amprice}
\end{align}

\subsubsection{Carbon market}

We incorporate a cap-and-trade scheme \cite{Chen2021Conjectural} into the carbon market to incentivize green ammonia production.
At the start of each year, CA is allocated to ReP2A and GA based on historical output, as described in Section~\ref{sec:setups}.
The CA granted to ReP2A, $q^{\text{rewa}}$, is then allocated among RG, HP, and RA, as detailed in Section~\ref{sec:CAAM}.
GA may buy CA from these entities to increase its permitted production under (\ref{eq:couple}), with total emissions less than $q^{\text{allo}}+q^{\text{rewa}}$. The annual market clears under (\ref{eq:marketc}) as follows:
\begin{gather}
q^{\text{rg}}+q^{\text{hp}}+q^{\text{ra}}\leq q^{\text{rewa}}, \label{eq:couple} \\
q^{\text{rg}}+q^{\text{hp}}+q^{\text{ra}}-q^{\text{ga}}=0:\rho^{\text{CA}}, \label{eq:marketc}
\end{gather}
where the dual variable $\rho^{\text{CA}}$ is the CA price, which is consistent across all entities.

\subsubsection{Electricity and hydrogen transactions}

Electricity and hydrogen trades occur within the ReP2A chain. The RG supplies electricity to the HP and RA, and the HP supplies hydrogen to the RA.
Settlement follows local marginal prices (LMPs) at each interval, and all transactions satisfy:
\begin{gather}
P_{t}^{\text{rg,sell,hp}}-P_t^{\text{hp,buy,rg}}=0:\rho_t^{\text{rg-hp,e}}, \label{eq:market1}\\
P_{t}^{\text{rg,sell,ra}}-P_t^{\text{ra,buy,rg}}=0:\rho_t^{\text{rg-ra,e}}, \label{eq:market2} \\
f_t^{\text{hp,sell,ra}}-f_t^{\text{ra,buy,hp}}=0:\rho_t^{\text{hp-ra,h}}. \label{eq:market3}
\end{gather}

All trading entities and their time scales are summarized in Fig.~\ref{fig:timescale} for ease of understanding.


\section{Operational decision modeling of ReP2A and gray ammonia systems}
\label{sec:model}

This section presents a hierarchical game framework, which is based on noncooperative game theory and captures interactions between ReP2A and GA, as well as among stakeholders in the ReP2A chain. The mathematical formulations are given below.

\subsection{Hierarchical game framework}
\label{sec:hiergame}

The hierarchical structure is illustrated in Fig.~\ref{fig:game}.
At the inner level (inside the ReP2A chain), RG, HP, and RA engage in electricity trading, hydrogen trading, and CA allocation, forming a Nash equilibrium.
At the outer level, the ReP2A chain and GA producer participate in the carbon and ammonia markets, captured by Nash-Cournot equilibrium.

\begin{figure}[t]
	\centering
	\includegraphics[width=5.7in]{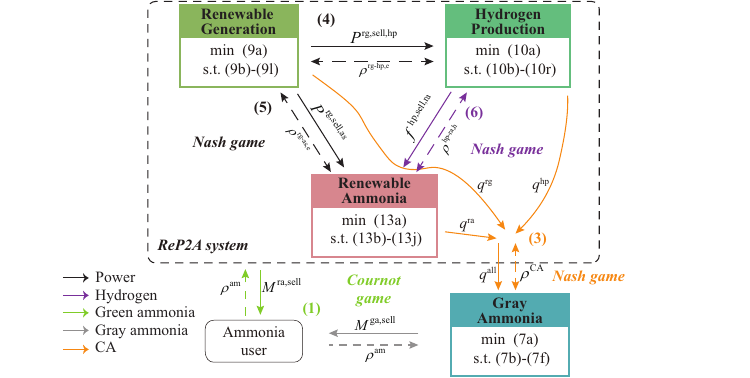}
	\caption{Hierarchical game framework inside the ReP2A chain and between ReP2A and GA systems}
	\label{fig:game}
\end{figure}

\subsection{Operation model of the gray ammonia system}
\label{sec:outgame}

The GA stakeholder is modeled as a profit-seeking producer subject to chemical process and CA constraints as follows:
\begin{subequations}
	\begin{align}
		\min~~ &C_{\text{ga}}=\Delta t \sum_{t=1}\nolimits^T c^{\text{ga}}M_t^{\text{ga,pro}} - q^{\text{ga}} \rho^{\text{CA}}-\sum\nolimits_w f(D_w^{\text{ga,sell}})D_w^{\text{ga,sell}}, \\
		\text{s.t.}~~
		&\text{ASY operation constraints in the GA:} \nonumber\\
		&\underline{\eta}^{\text{asy}}W^{\text{ga,asy}}\leq M_{t}^{\text{ga,pro}}\leq\overline{\eta}^{\text{asy}}W^{\text{ga,asy}}, \label{eq:aslimit}\\
		&-\underline{r}^{\text{asy}}W^{\text{ga,asy}}\leq M_{t+1}^{\text{ga,pro}}-M_{t}^{\text{ra,pro}}\leq\overline{r}^{\text{asy}}W^{\text{ga,asy}}, \label{eq:asramp}    \\
		&\textstyle \sum_{t=(w-1)\tau}^{w\tau} M_{t}^{\text{ga,pro}} \Delta t=D_w^{\text{ga,sell}}, \label{eq:assell}\\
		&\text{Carbon allowance compliance and settlement:} \nonumber\\
		&q_t^{\text{emis}}= k^{\text{emis}} M_{t}^{\text{ga,pro}} \Delta t, \label{eq:asemis}\\
		&\textstyle \sum_{t=0}^{T} q_t^{\text{emis}}\leq q^{\text{ga}}+q^{\text{allo}}, \label{eq:GACA}
	\end{align}
\end{subequations}
where overall cost $C_{\text{ga}}$ includes production cost, CA purchases, and ammonia sales revenue; (\ref{eq:aslimit})--(\ref{eq:asramp}) enforce the load range and ramping limits \cite{li2025redesigning}; (\ref{eq:assell}) links production and sales; and (\ref{eq:asemis})--(\ref{eq:GACA}) specify carbon emissions and CA compliance, i.e., that carbon emissions from GA must not exceed its CA.
Given that GA output is controllable and weekly ammonia demand in the spot market is limited, storing ammonia does not provide additional profit. Therefore, ammonia storage tanks (ASTs) are not included in the model.

For conciseness, the operation model of GA is expressed as:
\begin{align}
  \nonumber \min_{\bm{x}_{\text{ga}}}~ &\bm{C}_{\text{ga,1}}^{\text{T}}\bm{x}_{\text{ga}} + \bm{C}_{\text{ga,2}}^{\text{T}} \bm{x}_{\text{ga}}^{2},\\
\nonumber  \text{s.t.} ~~&\bm{A}_{\text{ga,1}}\bm{x}_{\text{ga}}=\bm{B}_{\text{ga,1}}:\bm{\lambda}_{\text{ga}},
  \\&\bm{A}_{\text{ga,2}}\bm{x}_{\text{ga}}\geq \bm{B}_{\text{ga,2}}:\bm{\mu}_{\text{ga}}, \label{eq:GApro}
\end{align}
where $\bm{x}_{\text{ga}}$ comprises all decision variables;
$\bm{A}_\cdot$, $\bm{B}_\cdot$, and $\bm{C}_\cdot$ are coefficient matrices;
$\bm{\lambda}_{\cdot}$ and $\bm{\mu}_{\cdot}$ are dual variables; and
$\bm{C}_{\text{ga,2}}$ is positive definite. Thus, problem~(\ref{eq:GApro}) is a convex quadratic programming (QP) problem.

\subsection{Operation model of the multistakeholder ReP2A system}
\label{sec:ingame}

The ReP2A system involves a multistakeholder chain consisting of RG, HP, and RA stakeholders, each acting with individual decision-making
\cite{zeng2025planning}. Their decision models are given below.

\subsubsection{Renewable power generation}

The RG stakeholder minimizes its cost $C_{\text{rg}}$ by managing power generation, storage, and sales and CA trades as follows:
\begin{subequations}\label{eq:RG}
\begin{align}
	\min~~ &C_{\text{rg}}=\Delta t \sum\nolimits_{t=1}^T\big[(-P_{t}^{\text{rg,sell,hp}}\rho_t^{\text{rg-hp,e}}-P_{t}^{\text{rg,sell,as}}\rho_t^{\text{rg-as,e}}) +\sigma^{\text{deg}}P_{t}^{\text{rg,bes,d}}\big] - q^{\text{rg}} \rho^{\text{CA}}, \label{eq:objrg}\\
\text{s.t.}~~
    &\text{Renewable power availability:} \nonumber\\
    & P_{t}^{\text{rg,wt/pv}}=P_{t}^{\text{rg,wt/pv,max}}-P_{t}^{\text{rg,wt/pv,curt}}, \ P_{t}^{\text{rg,wt/pv,curt}}\ge 0, \label{eq:wtpv1}\\
    & \text{Operation constraints of battery energy storage (BES):} \nonumber\\
    &\big|P_{t}^{\text{rg,bes,c/d}}\pm Q_{t}^{\text{rg,bes}}\big| \leq \sqrt{2} W^{\text{rg,bes}},~\big|Q_{t}^{\text{rg,bes}}\big| \leq W^{\text{rg,bes}},  \label{eq:bes1}\\
    &\bm{0} \leq \big[ P_{t}^{\text{rg,bes,c}},P_{t}^{\text{rg,bes,d}} \big] \leq \big[ 0.5W^{\text{rg,bes}},0.5W^{\text{rg,bes}} \big],  \label{eq:bes2}  \\
    & S_{t}^{\text{rg,bes}}=(1-\zeta^{\text{bes}})S_{t-1}^{\text{rg,bes}} + (\eta^{\text{bes,c}}P_{t}^{\text{rg,bes,c}} - \dfrac {P_{t}^{\text{rg,bes,d}}}{\eta^{\text{bes,d}}})\Delta t, \label{eq:bes3}\\
    & S_{t=(w-1)\tau}^{\text{rg,bes}}=S_{t=w\tau}^{\text{rg,bes}}, \forall w, \\
    &\underline{\eta}^{\text{bes}}W^{\text{rg,bes}} \leq S_{t}^{\text{rg,bes}}\leq\overline{\eta}^{\text{bes}}W^{\text{rg,bes}}, \label{eq:bes5}\\
    & \text{Electrical network power flow:} \nonumber\\
    & \big[| P_{t}^{\text{rg,wt/pv}} \pm Q_{t}^{\text{rg,wt/pv}}|, |Q_{t}^{\text{rg,wt/pv}}|\big] \leq [\sqrt{2},1] W^{\text{rg,wt/pv}}, \label{eq:wtpv2}\\
    & \sum\nolimits_{j':j\to j'}P_{jj',t}=P_{j,t}^{\text{rg,wt}}+P_{j,t}^{\text{rg,pv}}+P_{j,t}^{\text{rg,bes,d}}-P_{j,t}^{\text{rg,bes,c}} \nonumber\\
    &~~~~~~~~~-P_{j,t}^{\text{rg,sell,hp}}-P_{j,t}^{\text{rg,sell,as}}+  \sum\nolimits_{i:i\to j}P_{ij,t}-r_{ij}\ell_{ij,t}, \label{eq:dist1}\\
	& \sum\nolimits_{j':j\to j'}Q_{jj',t}=Q_{i,t}^{\text{rg,wt}}+Q_{i,t}^{\text{rg,pv}}+Q_{i,t}^{\text{rg,vc}}+Q_{i,t}^{\text{rg,bes}}+ \sum\nolimits_{i:i\to j}Q_{ij,t}-x_{ij}\ell_{ij,t}, \label{eq:dist2}\\
	&\upsilon_{j,t}=\upsilon_{i,t}-2r_{ij}P_{ij,t}-2x_{ij}Q_{ij,t}, \label{eq:dist3}\\
	&\underline{\upsilon}_{i} \leq \upsilon_{i,t}\leq\overline{\upsilon}_{i},  \label{eq:dist4}
    \end{align}
\end{subequations}
\noindent
where $C_{\text{rg}}$ includes electricity revenues, CA sales, and BES degradation cost \cite{yu2024optimal}; the degradation cost allows the charging/discharging complementarity constraint to be convexly relaxed \cite{wang2024exact};
(\ref{eq:bes1}) and (\ref{eq:bes2}) are active and reactive power constraints of the BES; the state of charge is limited by (\ref{eq:bes3})--(\ref{eq:bes5}); and (\ref{eq:wtpv1})--(\ref{eq:wtpv2}) constrain the active and reactive power of WT/PV.
The network power flow is described by the LinDistFlow model because of its radial topology \cite{zeng2024scheduling}, which includes branch power and voltage balance (\ref{eq:dist1})--(\ref{eq:dist3}) and nodal voltage limits (\ref{eq:dist4}).

\subsubsection{Hydrogen production}

The HP stakeholder purchases electricity from RG, produces hydrogen, sells hydrogen to the RA stakeholder, and trades CA to minimize cost $C_{\text{hp}}$, following:
\begin{subequations}
\begin{align}
	\min~~ \nonumber   &C_{\text{hp}}=\sum\nolimits_{t=1}^T \big[\big(P_t^{\text{hp,buy,rg}}\rho_t^{\text{rg-hp,e}}+\gamma\textstyle\sum_{mn}(p_{m,t}-p_{n,t}) \big) \\
	&~~~~~~~~~+\sigma^{\text{deg}}P_{t}^{\text{hp,bes,d}}-f_t^{\text{hp,sell,ra}}\rho_t^{\text{hp-ra,h}}\big]\Delta t -q^{\text{hp}} \rho^{\text{CA}}, \label{eq:objhp}\\
	\text{s.t.}~~
    &\text{P2H Operation constraints via water electrolysis:} \nonumber\\
    &f_{t}^{\text{hp,pro}}=P_{t}^{\text{hp,ae}}\eta^{\text{p2h}}, \label{eq:p2h}\\
	&\underline{\eta}^{\text{ae}}W^{\text{hp,ae}}\leq P_{t}^{\text{hp,ae}}\leq\overline{\eta}^{\text{ae}}W^{\text{hp,ae}},  \label{eq:p2hmaxmin}\\
	&P_{t}^{\text{hp,comp}}=f_{t}^{\text{hp,pro}}\eta^{\text{comp}}, \label{eq:hcomp}\\
	&P_{t}^{\text{hp,buy,rg}}+P_{t}^{\text{hp,bes,d}}=P_{t}^{\text{hp,bes,c}}+P_{t}^{\text{hp,ae}}+P_{t}^{\text{hp,comp}}. \label{eq:hbanlance}\\
    &\text{Operation limits of BES equipped in HP, which have the} \nonumber \\
    & \text{same forms of (\ref{eq:bes1})--(\ref{eq:bes5}) for }\{P_{t}^{\text{hp,bes,c/d}},Q_{t}^{\text{hp,bes}},S_{t}^{\text{hp,bes}}\}. \hspace{-3pt}\label{eq:hpbes}\\
    &\text{HST operation constraints:} \nonumber\\
    &S_{t+1}^{\text{hp,hst}} =S_{t}^{\text{hp,hst}}+(f_{t}^{\text{hp,hst,in}}-f_{t}^{\text{hp,hst,out}})\Delta t, \label{eq:hes1}\\
	&S_{t=(w-1)\tau}^{\text{hp,hst}}=S_{t=w\tau}^{\text{hp,hst}}, \forall w \\
	&\underline{\eta}^{\text{h}}W^{\text{hp,hst}}\leq S_{t}^{\text{hp,hst}}\leq\overline{\eta}^{\text{h}}W^{\text{hp,hst}}, \label{eq:hes3} \\
	&\textbf{0}\leq [f_{t}^{\text{hp,hst,in}},f_{t}^{\text{hp,hst,out}}]\leq[0.5W^{\text{hp,hst}}, 0.5W^{\text{hp,hst}}].    \label{eq:hes4}\\
    &\text{Hydrogen delivery via pipelines:} \nonumber\\
    &({F_{mn,t}}/{K_{mn}^{\text{gf}}})^{2}\leq p_{m,t}^{2}-p_{n,t}^{2}, \label{eq:weymouth}\\
    &{F_{mn,t}}/{K_{mn}^{\text{gf}}}>p_{m,t}-p_{n,t}, \label{eq:weymouth2}\\
	&F_{mn,t}=(F_{mn,t}^{\text{in}}+F_{mn,t}^{\text{out}})/2, ~F_{mn,t}\geq0 ,  \label{eq:hydrogenflow}\\
	&LP_{mn,t}=K_{mn}^{\text{lp}}(p_{m,t}+p_{n,t})/2, \label{eq:lp}\\
	&LP_{mn,t+1}=LP_{mn,t}+F_{mn,t}^{\text{in}}-F_{mn,t}^{\text{out}}, \label{eq:linepack}\\
	&\underline{p}_{m}\leq p_{m,t}\leq\overline{p}_{m},  \label{eq:pressure}\\
	&LP_{mn,t=(w-1)\tau}=LP_{mn,t=w\tau}, \forall w, \label{eq:lp-balance}\\
 &f_{t}^{\text{hp,pro}}+f_{t}^{\text{hp,hst,out}}-f_{t}^{\text{hp,hst,in}}+F_{mn,t}^{\text{out}}-F_{mn,t}^{\text{in}}=f_{t}^{\text{hp,sell,ra}}. \label{eq:hydrogenbalance}
\end{align}
\end{subequations}
\noindent
where $C_{\text{hp}}$ consists of electricity purchases, hydrogen sales, CA sales, BES degradation, and a nodal pressure penalty to ensure exact relaxation of the Weymouth equation \cite{liu2020application};
(\ref{eq:p2h})--(\ref{eq:p2hmaxmin}) describe the efficiency and load range of HP; 
(\ref{eq:hcomp})--(\ref{eq:hbanlance}) specify compressor load and plant power balance.
The charging/releasing behaviors of BES and HST follow (\ref{eq:hpbes})--(\ref{eq:hes4}), and
pipeline physics follow the 
Weymouth equation, with its second-order cone relaxation given in (\ref{eq:weymouth})--(\ref{eq:hydrogenflow}). Eqs. (\ref{eq:lp})--(\ref{eq:lp-balance}) establish the relation among pressure, hydrogen flow, and linepack, and hydrogen balance is enforced by (\ref{eq:hydrogenbalance}).

The HP operation problem is a second-order cone programming (SOCP).
To improve tractability, the conic constraint~(\ref{eq:weymouth}) is approximated by polyhedral linear constraints as follows:
\begin{subequations}\label{eq:soc}
	\begin{gather}
		\xi^0\geq F_{mn}/K_{mn}^\text{gf},\omega^0\geq p_n, \\
		\xi^Z\leq p_m,\omega^Z\leq\tan(\frac{\pi}{2^{Z+1}})\xi^Z, \\
		\xi^z=\sin(\frac{\pi}{2^{z+1}})\omega^{z-1}+\cos(\frac{\pi}{2^{z+1}})\xi^{z-1},\forall z, \\
		\omega^z\geq\left|\cos(\frac{\pi}{2^{z+1}})\omega^{z-1}-\sin(\frac{\pi}{2^{z+1}})\xi^{z-1}\right|,\forall z,
	\end{gather}
\end{subequations}
where $(\xi^z)_{z \in [0,...,Z]}$ and $\omega^z$ are auxiliary variables.  This transforms the SOCP into a linear programming (LP) problem. The approximation error is quantified by
\begin{subequations}
	\begin{gather}
		({F_{mn,t}}/{K_{mn}^{\text{gf}}})^{2}\leq [1+\epsilon(Z)]^2 p_{m,t}^{2}-p_{n,t}^{2}, \\
		\epsilon(Z)={1}/{\cos({\frac{\pi}{2^{Z+1}}})}-1.
	\end{gather}
\end{subequations}
The larger the value of $Z$, the smaller the error, reaching about 0.05\% at $Z=6$.

\subsubsection{Green ammonia synthesis}

The RA stakeholder determines the purchase of feedstock hydrogen and electricity and production, storage, and sales of green ammonia, as well as trades of CA, as follows:
\begin{subequations}\label{eq:RA}
\begin{align}
	\min~~ \nonumber  & C^{\text{ra}}=\sum_{t=1}\nolimits^T \big[(f_t^{\text{ra,buy,hp}}\rho_t^{\text{hp-ra,h}}+P_t^{\text{ra,buy,rg}}\rho_t^{\text{rg-ra,e}})\Delta t\\
	&~~~~~~~~
	+P_t^{\text{ra,back}}\rho_t^{\text{ra,back}}\Delta t \big]-q^{\text{ra}} \rho^{\text{CA}}-\sum_w g(M_w^{\text{ra,sell}}) M_w^{\text{ra,sell}}  \label{eq:objra}\\
\text{s.t.}~~
    &\text{operation constraints of HST equipped in RA, the same} \nonumber\\
    &\text{form as (\ref{eq:hes1})--(\ref{eq:hes4}) for }\{f_{t}^{\text{ra,hst,in/out}},S_{t}^{\text{ra,hst}},W^{\text{ra,hst}}\}, \label{eq:rahst}\\
    &\text{ASY operation constraints:} \nonumber\\
    &f_{t}^{\text{ra,use}}+f_{t}^{\text{ra,hst,in}}=f_{t}^{\text{ra,hst,out}}+f_{t}^{\text{ra,buy,hp}},  \label{eq:rah}\\
    &P_{t}^{\text{ra,back}}+P_{t}^{\text{ra,buy,rg}}=P_{t}^{\text{as,asy}},  \label{eq:pas}\\
    &M_{t}^{\text{ra,pro}}=f_{t}^{\text{ra,use}}\eta^{\text{h2a}},  \label{eq:h2a}\\
	&M_{t}^{\text{ra,pro}}=P_{t}^{\text{ra,asy}}\eta^{\text{p2a}},  \label{eq:p2a}\\
    &\text{and ramping limits as (\ref{eq:aslimit})--(\ref{eq:asramp}) for } \{M_{t}^{\text{ra,pro}},W^{\text{ra,asy}}\}, \label{eq:ras}\\
    &\text{AST operation constraints:} \nonumber\\
    &S_{w+1}^{{\text{ra,ast}}}=S_{w}^{{\text{ra,ast}}}+\textstyle \sum_{t=(w-1)\tau+1}^{w\tau} M_{t}^{{\text{ra,pro}}}\Delta t-D_{w}^{{\text{ra,sell}}}, \label{eq:ast1}\\
	&S_{w=0}^{{\text{ra,ast}}}=S_{w={T}/{\tau}+1}^{{\text{ra,ast}}},\\
	&0\leq S_{w}^{{\text{ra,ast}}}\leq W^{{\text{ra,ast}}} \label{eq:ast3},
\end{align}
\end{subequations}
\noindent
where cost $C^{\text{ra}}$ consists of electricity and hydrogen purchases, the backup power cost, and ammonia and CA sales; 
(\ref{eq:rah})--(\ref{eq:pas}) specify hydrogen and power balance at the green ammonia plant, with hydrogen and power consumption shown in (\ref{eq:h2a})--(\ref{eq:p2a}), respectively; and
(\ref{eq:ast1})--(\ref{eq:ast3}) are AST operational constraints, which optimize trading by adjusting storage levels.


\section{Equilibrium analysis of the hierarchical incentive mechanism}
\label{sec:CAA}

In this section, the solution method for the hierarchical incentive mechanism
is developed. The inner-level Nash equilibrium is converted into a convex optimization via KKT conditions.
To improve tractability, the inner equilibrium is decomposed into several ammonia production subproblems (SPs) and one main transaction problem (MP).
The hierarchical game is then solved based on the KKT system. The overall solution framework is shown in Fig.~\ref{fig:solution}.
The results of the inner-level analysis reveal that the CA allocation within the ReP2A system is not uniquely determined at equilibrium. To ensure that all entities in the ReP2A system benefit, a CA allocation mechanism (CAM) that considers participation willingness is introduced.

\begin{figure}[t]
	\centering
	\includegraphics[width=6.5in]{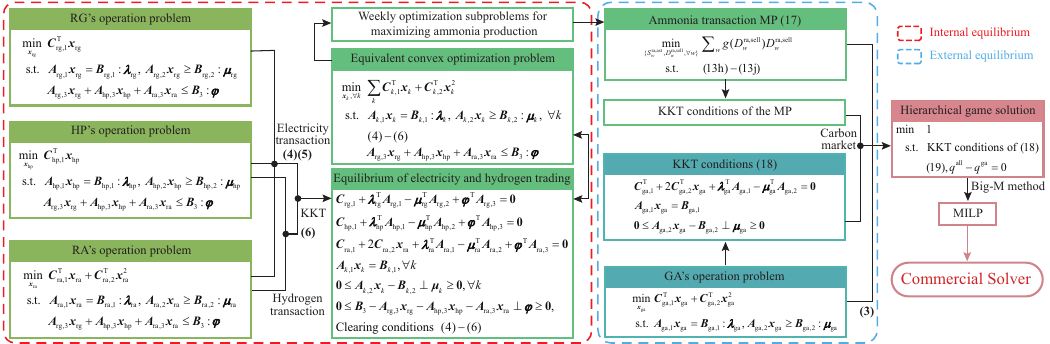}
	\caption{Overall solution framework for the equilibrium analysis of the hierarchical game}
	\label{fig:solution}
\end{figure}

\subsection{Inner-level equilibrium among RG, HP, and RA}
\label{sec:why}

For conciseness, the operation problems of RG (Eqs. (\ref{eq:couple}) and (\ref{eq:RG})), HP (Eqs. (\ref{eq:couple}), (\ref{eq:objhp})--(\ref{eq:hes4}), (\ref{eq:weymouth2})--(\ref{eq:hydrogenbalance}), and (\ref{eq:soc})), and RA (Eqs. (\ref{eq:couple}) and (\ref{eq:RA})) are compactly written as
follows:
\begin{align}
  \nonumber \min_{\bm{x}_{k}}~ &\bm{C}_{k\text{,1}}^{\text{T}}\bm{x}_{k} + \bm{C}_{k\text{,2}}^{\text{T}} \bm{x}_{k}^{2},\\
  \nonumber  \text{s.t.} ~~&\bm{A}_{k\text{,1}}\bm{x}_{k}=\bm{B}_{k\text{,1}}:\bm{\lambda}_{k},\\
  \nonumber  &\bm{A}_{k\text{,2}}\bm{x}_{k}\geq \bm{B}_{k\text{,2}}:\bm{\mu}_{k},\\
  ~~&\bm{A}_{\text{rg,3}}\bm{x}_\text{rg}+\bm{A}_{\text{hp,3}}\bm{x}_\text{hp}+\bm{A}_{\text{ra,3}}\bm{x}_\text{ra}\leq \bm{B}_{\text{3}}:\bm{\varphi}, \label{eq:stakeholder}
\end{align}
where $(\bm{x}_k)_{k \in \text{\{rg,hp,ra\}}}$ denote the decision variables of RG, HP, and RA, respectively; $\bm{\varphi}$ is the dual variable; $\bm{C}_{\text{rg/hp,2}}$ are both zero; and $\bm{C}_{\text{ra,2}}$ is positive definite. Since all stakeholders share identical dual variables for the coupling constraints in (\ref{eq:stakeholder}), the equilibrium constitutes a \textit{variational equilibrium (VE)}, which is unique \cite{rosen1965existence}.

Because problem~(\ref{eq:stakeholder}) is either an LP  or a QP problem, which are both convex, the equilibrium in the multistakeholder ReP2A system can be obtained from the joint KKT conditions \cite{chen2020operational, guo2021equilibrium}: 
\begin{subequations}\label{eq:allkkt}
\begin{align}
  &\bm{C}_{\text{rg,1}}+\bm{\lambda}_{\text{rg}}^{\text{T}}\bm{A}_{\text{rg,1}}-\bm{\mu}_{\text{rg}}^{\text{T}}\bm{A}_{\text{rg,2}}+\bm{\varphi}^{\text{T}}\bm{A}_{\text{rg,3}}=\bm{0},\\
&\bm{C}_{\text{hp,1}}+\bm{\lambda}_{\text{hp}}^{\text{T}}\bm{A}_{\text{hp,1}}-\bm{\mu}_{\text{hp}}^{\text{T}}\bm{A}_{\text{hp,2}}+\bm{\varphi}^{\text{T}}\bm{A}_{\text{hp,3}}=\bm{0},\\
&\bm{C}_{\text{ra,1}}+ 2\bm{C}_{\text{ra,2}}\bm{x}_\text{ra}+\bm{\lambda}_{\text{ra}}^{\text{T}}\bm{A}_{\text{ra,1}}-\bm{\mu}_{\text{ra}}^{\text{T}}\bm{A}_{\text{ra,2}}+\bm{\varphi}^{\text{T}}\bm{A}_{\text{ra,3}}=\bm{0},\\
&\bm{A}_{k\text{,1}}\bm{x}_{k}=\bm{B}_{k\text{,1}},~ \forall k\\
&\bm{0}\leq\bm{A}_{k\text{,2}}\bm{x}_{k}-\bm{B}_{k\text{,2}}\perp\bm{\mu}_{k}\geq \bm{0}, ~ \forall k\\
&\bm{0}\leq \bm{B}_{\text{3}}-\bm{A}_{\text{rg,3}}\bm{x}_\text{rg}-\bm{A}_{\text{hp,3}}\bm{x}_\text{hp}-\bm{A}_{\text{ra,3}}\bm{x}_\text{ra}\perp \bm{\varphi}\geq \bm{0},\\
&\text{Clearing conditions (\ref{eq:market1})--(\ref{eq:market3}).}
\end{align}
\end{subequations}

Next, an equivalent convex optimization (\ref{eq:problem}), which is
a positive definite QP, 
is employed to obtain the equilibrium \cite{zeng2025planning, egging2020solving}, as its KKT conditions are consistent with 
(\ref{eq:allkkt}).
Its derivation is straightforward, and we thus skip it 
for brevity. \vspace{-0pt}
\begin{subequations}\label{eq:problem}
\begin{align}
\min_{\bm{x}_{k}, \forall k}~ & \sum_k \bm{C}_{k\text{,1}}^{\text{T}}\bm{x}_{k} + \bm{C}_{\text{ra,2}}^{\text{T}} \bm{x}_\text{ra}^{2}, \label{eq:objall}\\
 \text{s.t.} ~~&\bm{A}_{k\text{,1}}\bm{x}_{k}=\bm{B}_{k\text{,1}}:\bm{\lambda}_{k},~\forall k\\
 &\bm{A}_{k\text{,2}}\bm{x}_{k}\geq \bm{B}_{k\text{,2}}:\bm{\mu}_{k}, ~\forall k\\
~~&\bm{A}_{\text{rg,3}}\bm{x}_\text{rg}+\bm{A}_{\text{hp,3}}\bm{x}_\text{hp}+\bm{A}_{\text{ra,3}}\bm{x}_\text{ra}\leq \bm{B}_{\text{3}}:\bm{\varphi},\\
~~&\text{(\ref{eq:market1})--(\ref{eq:market3})},
\end{align}\end{subequations}
where (\ref{eq:objall}) is obtained by summing (\ref{eq:objrg})--(\ref{eq:objhp}) and (\ref{eq:objra}), representing the total cost of RG, HP, and RA excluding terms related to electricity and hydrogen transactions, i.e., $P_{t}^{\text{hp,buy,rg}}\rho_t^{\text{rg-hp,e}}-P_{t}^{\text{rg,sell,hp}}\rho_t^{\text{rg-hp,e}}$, $P_{t}^{\text{ra,buy,rg}}\rho_t^{\text{rg-ra,e}}-P_{t}^{\text{rg,sell,ra}}\rho_t^{\text{rg-ra,e}}$, and
$f_{t}^{\text{ra,buy,hp}}\rho_t^{\text{hp-ra,h}}-P_{t}^{\text{hp,sell,ra}}\rho_t^{\text{hp-ra,h}}$.

The following proposition assists in the solution of the overall hierarchical game and is used in Section \ref{sec:method}.
\begin{proposition}
At equilibrium, the total traded CA, $\hat{q}^\text{all}$, is uniquely determined, whereas the CA allocations $q^{\text{rg}}$, $q^{\text{hp}}$, and $q^{\text{ra}}$ are indeterminate.
\end{proposition}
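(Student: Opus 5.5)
The plan is to work directly with the equivalent convex problem (\ref{eq:problem}), using the fact (stated above) that its KKT system coincides with the joint KKT system (\ref{eq:allkkt}) of the inner-level game. The first step is to isolate how the CA variables enter. The three quantities $q^{\text{rg}}$, $q^{\text{hp}}$, and $q^{\text{ra}}$ appear in only three places:
\begin{itemize}
\item the linear cost terms $-q^{k}\rho^{\text{CA}}$ in (\ref{eq:objrg}), (\ref{eq:objhp}), and (\ref{eq:objra}), which all carry the \emph{same} price $\rho^{\text{CA}}$;
\item the shared coupling constraint (\ref{eq:couple});
\item the clearing condition (\ref{eq:marketc}) with the GA purchase $q^{\text{ga}}$.
\end{itemize}
None of the physical constraints of RG, HP, or RA involve any $q^k$. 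I would therefore introduce $\hat{q}^{\text{all}}=q^{\text{rg}}+q^{\text{hp}}+q^{\text{ra}}$ and rewrite (\ref{eq:objall}). The objective becomes a function of the physical variables plus $-\rho^{\text{CA}}\hat{q}^{\text{all}}$. The constraints become those physical constraints plus $\hat{q}^{\text{all}}\le q^{\text{rewa}}$ and $\hat{q}^{\text{all}}=q^{\text{ga}}$. So the CA variables enter (\ref{eq:problem}) only through their sum.

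\textbf{Uniqueness of $\hat{q}^{\text{all}}$.} Within the hierarchical game, $\hat{q}^{\text{all}}$ is pinned by the clearing condition (\ref{eq:marketc}) to $q^{\text{ga}}$. That value is a component of the outer Nash--Cournot equilibrium, so the claim reduces to uniqueness of $q^{\text{ga}}$ there. I would argue this from the strict convexity of the GA problem (\ref{eq:GApro}), where $\bm{C}_{\text{ga,2}}\succ 0$. The GA's CA purchase is determined by its binding emission constraint (\ref{eq:GACA}) at the (unique) production profile when the CA price is positive.

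When the constraint is slack, complementarity gives $\rho^{\text{CA}}=0$ and $q^{\text{ga}}$ is still fixed at its minimal feasible value by the market clearing. This edge case may need a remark or a convention. I would also invoke the uniqueness of the variational equilibrium via \cite{rosen1965existence}, applied to the aggregated problem in which $\hat{q}^{\text{all}}$ replaces the three separate variables. Combining these gives a unique $\hat{q}^{\text{all}}$.

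\textbf{Indeterminacy of the split.} Take any equilibrium $(\bm{x}_k^*,\bm{\lambda}^*,\bm{\mu}^*,\bm{\varphi}^*,\rho^{\text{CA}*})$. Perturb the allocation to $q^{k}+\delta_k$ with $\sum_k\delta_k=0$, chosen so that every $q^k$ stays nonnegative (if sign constraints are imposed). Then I would check each KKT condition in turn:
\begin{itemize}
\item The stationarity conditions with respect to each $q^k$ read $-\rho^{\text{CA}}+\varphi_{\text{CA}}=0$. These are identical across $k$ and independent of the value of $q^k$, so they still hold.
\item The complementarity for (\ref{eq:couple}) depends only on the sum, which is unchanged.
\item The clearing condition (\ref{eq:marketc}) is likewise unchanged.
\item No other condition involves $q^k$.
\end{itemize}
Hence the perturbed point satisfies (\ref{eq:allkkt}) and is also an equilibrium. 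This yields a whole face $\{q^k\ge 0,\ \sum_k q^k=\hat{q}^{\text{all}}\}$ of equilibrium allocations, which is nonsingleton whenever $\hat{q}^{\text{all}}>0$.

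Equivalently, in (\ref{eq:problem}) the Hessian is singular along the direction $(\delta_{\text{rg}},\delta_{\text{hp}},\delta_{\text{ra}})$. The positive definiteness assumed of the equivalent QP holds only for the physical block, and I would flag that uniqueness in \cite{rosen1965existence} really applies to the reduced problem.

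The main obstacle is reconciling this with the earlier claim that the VE is unique. I plan to handle it by stating uniqueness for the reduced variables (the physical variables together with $\hat{q}^{\text{all}}$) and showing that the full solution set is the product of that unique point with the allocation simplex. A secondary difficulty is the degenerate case $\rho^{\text{CA}}=0$, which I would treat separately.
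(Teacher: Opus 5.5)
\textbf{Indeterminacy of the split.} This part is correct and is the paper's argument made explicit. The paper substitutes $q^{\text{all}}=q^{\text{rg}}+q^{\text{hp}}+q^{\text{ra}}$ into (\ref{eq:problem}) and notes that the CA variables enter only through this sum. It then asserts that the reduced problem yields the optimal $q^{\text{all}}$ and calls the individual allocations redundant, hence indeterminate. Your check that a zero-sum perturbation preserves every condition in (\ref{eq:allkkt}) formalizes that ``redundant'' step. Your remark that any uniqueness can only refer to a reduced problem, with the solution set being a product with the allocation simplex, is a legitimate sharpening: the paper's claims that the VE is unique and that (\ref{eq:problem}) is positive definite cannot hold literally in the original variables.

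\textbf{Uniqueness of the total.} Here your route differs from the paper's. You go through the clearing condition (\ref{eq:marketc}), the GA's binding constraint (\ref{eq:GACA}), and uniqueness of the outer Nash--Cournot equilibrium. This works, but it is heavier than needed and leans on cited uniqueness results. In particular, your reconciliation step appeals to \cite{rosen1965existence}, whose diagonal strict convexity condition fails for the linear RG and HP objectives. So you cannot really claim uniqueness of all the physical variables, and you do not need it.

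The stationarity condition you already wrote gives a direct argument. From $-\rho^{\text{CA}}+\varphi_{\text{CA}}=0$, a positive CA price gives $\varphi_{\text{CA}}>0$. Complementarity then forces (\ref{eq:couple}) to bind, so $\hat{q}^{\text{all}}=q^{\text{rewa}}$, with no appeal to the GA side. This matches the case study, where exactly $69$ kt $=q^{\text{rewa}}$ is traded.

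\textbf{The degenerate case.} Your treatment of $\rho^{\text{CA}}=0$ is wrong as stated: market clearing does not pin $q^{\text{ga}}$ to its minimal feasible value. With a zero price, every stationarity condition in $q^{k}$ and $q^{\text{ga}}$ holds trivially. Hence any common value $q^{\text{ga}}=q^{\text{all}}$ between $\max\{0,\,k^{\text{emis}}\sum_w D_w^{\text{ga,sell}}-q^{\text{allo}}\}$ and $q^{\text{rewa}}$ satisfies all KKT and clearing conditions. The total is therefore genuinely non-unique in this case, as in the loose-cap regime of the sensitivity study where the carbon price drops to zero. The proposition needs $\rho^{\text{CA}}>0$ as a standing hypothesis, or an explicit selection rule, rather than a derivation. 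The paper's proof is silent on this case as well.
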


\begin{proof}
The CA constraint in (\ref{eq:problem}) follows (\ref{eq:couple}), and the objective includes the terms of $q^{\text{rg}} \rho^{\text{CA}}$, $q^{\text{hp}} \rho^{\text{CA}}$, and $q^{\text{ra}} \rho^{\text{CA}}$.
We use $q^\text{all}$ to replace $q^\text{rg/hp/ra}$, variables related to CA allocation, with:
\begin{align}
q^\text{all}=q^{\text{rg}}+q^{\text{hp}}+q^{\text{ra}}. \label{eq:replace}
\end{align}

This substitution does not alter the solution to (\ref{eq:problem}) and yields the optimal $q^\text{all}$. Because $q^{\text{rg}}$, $q^{\text{hp}}$, and $q^{\text{ra}}$ are redundant variables in (\ref{eq:problem}), they are indeterminate at equilibrium.
\end{proof}


\subsection{Equilibrium solution for the hierarchical game}
\label{sec:method}

\begin{figure}[t]
	\centering
	\includegraphics[width=3.8in]{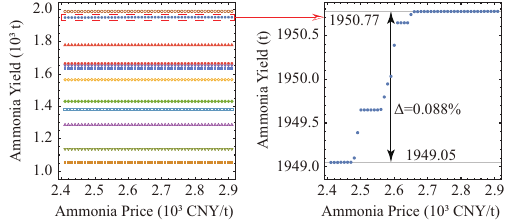}
	\caption{Ammonia yield for 12 weeks under varying ammonia prices.}
	\label{fig:proof}
\end{figure}

By replacing the inner equilibrium with (\ref{eq:problem}), one can theoretically solve the carbon and ammonia market equilibrium by jointly handling the KKT conditions of (\ref{eq:problem}) and (\ref{eq:GApro}). However, owing to long-term trading, (\ref{eq:problem}) is very large. Even if 12 typical weeks are adopted to represent a year, the problem contains 424,404 constraints and 264,102 variables. Its optimality, whether formulated as KKT (including complementarity slackness) or primal-dual (including strong-duality equality) conditions, introduces bilinear and binary terms, making it computationally intractable. Thus, simplifications 
are required.


\begin{remark}
	Physically, (\ref{eq:problem}) schedules the use of renewable energy sources (RES) to maximize revenues from ammonia and CA transactions.
	Because ammonia trading occurs weekly, AST operations can be decoupled from ammonia production.
\end{remark}

\begin{remark}
	The weekly ammonia production remains nearly invariant under changes in the price of ammonia. This is verified using over 60,000 simulations, each across 12 weeks.
	The results 
	shown in Fig.~\ref{fig:proof} reveal that the ammonia yield fluctuates by less than 0.1\%. Thus, the scheduling of ammonia production can be decoupled from ammonia trading. 
\end{remark}

Based on the above, the inner equilibrium problem (\ref{eq:problem}) is decomposed into 12 production SPs and one trading MP.
By solving the SPs to obtain the weekly ammonia yield $\Delta t \sum_{t=(w-1)\tau+1}^{w\tau} \hat{M}_{t}^{{\text{ra,pro}}}$,
and substituting it into (\ref{eq:ast1}) as parameters, the MP can be simplified as follows:
\begin{subequations}\label{eq:MP}
\begin{align}
&\min_{\{S_{w}^{{\text{ra,ast}}},D_w^{\text{ra,sell}},\forall w\}}~\sum\nolimits_w g(D_w^{\text{ra,sell}}) D_w^{\text{ra,sell}},\\
&\text{~~~~~~~~~s.t. (\ref{eq:ast1})--(\ref{eq:ast3})}.
\end{align}
\end{subequations}


Finally, we can derive the KKT conditions of the hierarchical game. The KKT conditions of GA operation (\ref{eq:GApro}) include:
\begin{subequations}\label{eq:gakkt}
\begin{gather}
\bm{C}_{\text{ga,1}}+ 2\bm{C}_{\text{ga,2}}^{\text{T}} \bm{x}_{\text{ga}}+\bm{\lambda}_{\text{ga}}^{\text{T}}\bm{A}_{\text{ga,1}}-\bm{\mu}_{\text{ga}}^{\text{T}}\bm{A}_{\text{ga,2}}=\bm{0}, \label{eq:gakkt1}\\
\bm{A}_{\text{ga,1}}\bm{x}_{\text{ga}}=\bm{B}_{\text{ga,1}}, \label{eq:gakkt2}\\
\bm{0}\leq\bm{A}_{\text{ga,2}}\bm{x}_{\text{ga}}-\bm{B}_{\text{ga,2}}\perp\bm{\mu}_{\text{ga}}\geq \bm{0}, \label{eq:gakkt3}
\end{gather}
\end{subequations}
where (\ref{eq:gakkt1}) represents stationarity, (\ref{eq:gakkt2}) summarizes the equality constraints, and (\ref{eq:gakkt3}) aggregates the inequality constraints with complementary slackness.
Similarly, the KKT conditions of the ReP2A operation (\ref{eq:MP}) are derived. 

By combining (\ref{eq:gakkt}), the KKT conditions of (\ref{eq:MP}), and the clearing condition of CA transaction, we obtain the outer-level market equilibrium.
For computational efficiency, these equations are replaced by the following  optimization  problem: \vspace{0pt}
\begin{subequations}\label{eq:probfinal}
\begin{align}
 &\min~ 1,\\
 &\text{ s.t.~} \text{(\ref{eq:gakkt})},~\text{KKT conditions of (\ref{eq:MP})},~q^{\text{all}}=q^{\text{ga}},
\end{align}
\end{subequations}
in which the complementary slackness can be reformulated into a mixed-integer linear form by the big-M method, resulting in an MILP that is easy to solve with commercial software.

After the outer equilibrium is solved, the resulting ammonia price $\rho_w^{\text{am}}$ and CA price $\rho^{\text{CA}}$, together with the AST operational results, are substituted into the inner equilibrium problem (\ref{eq:problem}) to obtain electricity prices $\rho^{\text{rg-hp/ra,e}}$ and hydrogen prices $\rho_t^{\text{hp-ra,h}}$. By far, all the variables are obtained at equilibrium. Discussion on the uniqueness of the equilibrium is provided in Section \ref{sec:solutionstability}.

\subsection{An Individually Rational CA allocation mechanism}
\label{sec:CAAM}

Once equilibrium is reached, the total traded quantity $\hat{q}^\text{all}$ and clearing price $\hat{\rho}^\text{CA}$ of CA are determined. The remaining task is to allocate the resulting carbon revenue among RG, HP, and RA stakeholders in a fair and individually rational manner. All stakeholders remain individually rational, meaning that their revenue with CA trading cannot fall below the revenue without such trading; otherwise, CA rewards would not motivate participation.

To achieve this, carbon revenue is allocated by minimizing the aggregate deviation in revenue changes across stakeholders:
\begin{align}
	\hspace{-8pt}  \Delta J_\text{sum}=|\Delta J_\text{rg}-\Delta J_\text{hp}|+|\Delta J_\text{rg}-\Delta J_\text{ra}|+|\Delta J_\text{hp}-\Delta J_\text{ra}|, \label{eq:Dsum}
\end{align}
where $(\Delta J_k)_{k \in \text{\{rg,hp,ra\}}}$ are 
revenue changes of RG/HP/RA; and 
\begin{align}
	\Delta J_k=(\hat{C}_k-\tilde{C}_k)/\tilde{C}_k, ~\Delta J_k\geq0,~\forall k \in \text{\{rg,hp,ra\}}. \label{eq:Dk}
\end{align}
Here, $\tilde{C}_k$ is the revenue under the equilibrium without CA trading (i.e., $q^k\rho^{\text{CA}}=0$), and $\hat{C}_k$ is the revenue under CA trading, including the carbon revenue to be allocated.

The CA allocation problem is therefore given as follows:
\begin{subequations}\begin{align}
		&\min_{q^k,\forall k}~ \Delta J_\text{sum},\\
		&\text{~~s.t.~} \hat{q}^\text{all}=q^{\text{rg}}+q^{\text{hp}}+q^{\text{ra}}.
	\end{align}
\end{subequations}

As CA rewards are properly allocated, this mechanism ensures that all entities retain sufficient incentives to participate in ReP2A production, offering a practical approach to balancing benefits in the multistakeholder system.
The proposed CAM (PCAM) ensures that the IR in the multistakeholder ReP2A system, meaning that the carbon trading strategy given by the equilibrium model is in the best interest of each entity.
Notably, strategic misreporting is practically infeasible in this context, as stakeholders' energy-mass flows are physically metered and subject to strict verification by regulatory authorities.
Since electricity and hydrogen prices are dual variables derived only after reaching equilibrium, providing an a priori analytical proof of IR is infeasible. Alternatively, this property is empirically verified by 
perturbation simulations; see Section \ref{sec:incentive}.

\begin{figure}[tb]
	\centering
	\includegraphics[width=4in]{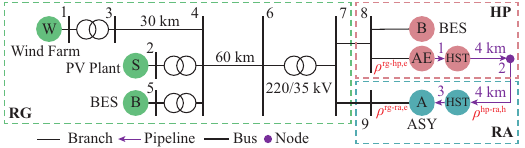}
	\caption{Topology of the ReP2A system used in the case study}
	\label{fig:case}
\end{figure}

\begin{figure}[t]
	\centering
	\includegraphics[width=3.8in]{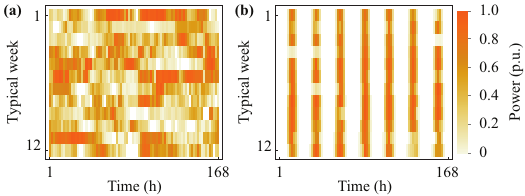}
	\caption{Typical weeks of RES in the case study. (a) Wind and (b) solar power.}
	\label{fig:WS}\vspace{-0pt}
\end{figure}

\section{Case studies}
\label{sec:cases}

In this section, the proposed incentive mechanism is evaluated through case studies. All the simulations are conducted in \emph{Wolfram Mathematica 14.0} on a laptop with an \emph{Intel Core Ultra 7 165H@3.80 GHz} CPU and 32 GB of RAM. In the hierarchical game, the inner- and outer-level problems are solved using \emph{Mosek 11.0} and \emph{Gurobi 12.02}, respectively.

\begin{table}[tb]\footnotesize
	\renewcommand{\arraystretch}{1.25}
	\caption{Equipment capacity parameters in case studies}\vspace{6pt}
	\label{tab:capacity}
	\centering
	\begin{tabular}{cc|cc}
		\hline\hline
		Parameter                          & Value                 & Parameter                   &Value            \\
		\hline
		$W^{\text{rg,wt}}/W^{\text{rg,pv}}$         &300/100 MW            &$W^{\text{rg,bes}}/W^{\text{hp,bes}}$      & 150/50 MWh  \\          $W^{\text{hp,ae}}$         & 150 MW            &$W^{\text{hp,hst}}/W^{\text{ra,hst}}$       & 1/2$\times10^5$  Nm$^3$       \\ $W^{\text{ra,asy}}/W^{\text{ga,asy}}$         & 15.66/78.3 t/h      &$W^{\text{ra,ast}}$         & 1000 t           \\
		\hline\hline
	\end{tabular}\vspace{10pt}
	
	\renewcommand{\arraystretch}{1.25}
	\caption{Carbon market mechanism benchmarks for comparisons}\vspace{6pt}
	\label{tab:case}
	\centering
	\begin{tabular}{cccc}
		\hline\hline
		\textbf{Mechanism}         & \tabincell{c}{CA cap for\\gray ammonia}                 & \tabincell{c}{CA incentive for\\green ammonia}                   &\tabincell{c}{CA\\transactions}          \\
		\hline
		M1              & \ding{55}      &\ding{55}     & \ding{55}                 \\
		M2              & \ding{51}      &\ding{55}     & \ding{55}                 \\
		M3              & \ding{51}      &\ding{51}     & Fixed price               \\ \hline
		\textbf{PCIM}   &\ding{51}       &\ding{51}     & \textbf{Equilibrium price}  \\
		\hline\hline
	\end{tabular}
\end{table}

\subsection{Case setups}
\label{sec:setups}

The case studies use an off-grid ReP2A system and 12 representative weeks of wind and solar data from a real-world project in northern China \cite{zeng2025planning}, as shown in Figs.~\ref{fig:case} and \ref{fig:WS}.
The capacity parameters are listed in Table~\ref{tab:capacity}, and all the operational parameters follow those of our previous work \cite{zeng2025planning}.

In the ammonia market, the maximum ammonia price $\rho^\text{max}$ is set to 2,900~CNY/t, and the price-elasticity factor $k^\text{am}$ is 35 t$^2$/CNY. The carbon intensity and production cost of GA are 3~t CO$_2$/t NH$_3$ \cite{yu2023china,shin2025comparative} and 2,000~CNY/t \cite{shin2025comparative}, respectively. The annual total CA is determined via the grandfathering method \cite{zhang2024grandfather}, using the given emission intensity, a benchmark yield (90\% of capacity), and a 3\% annual reduction factor. This approach yields $3\times78.3\times168\times12\times0.9\times0.97\approx413$ kt. Then, historical production data \cite{2024chinacarbon, zhao2010long} determine the initial and incentive CA, allocating $q^{\text{allo}}=344$ kt to GA and $q^{\text{rewa}}=69$ kt to ReP2A.

\begin{table*}[tb]
	\renewcommand{\arraystretch}{1.3}
	\footnotesize
	\caption{Operational performance comparison of different carbon transaction mechanisms}\vspace{6pt}
	\label{tab:comparsion1}
	\centering
	\begin{tabular}{cccccccc}
		\hline\hline
		\textbf{Mechanism}    & \tabincell{c}{ReP2A revenue\\(10$^7$ CNY)} & \tabincell{c}{GA revenue\\(10$^7$ CNY)}         &\tabincell{c}{CA traded\\(10$^3$ t)}     &\tabincell{c}{CA price\\(CNY/t)}   &\tabincell{c}{Gray/green\\ammonia\\yield (10$^3$ t)}  &\tabincell{c}{Average\\ammonia\\ price (CNY/t)}  & \tabincell{c}{Carbon\\emissions\\(10$^3$ t)} \\
		\hline
		M1        & 4.40 (baseline)       &7.59         & 0      & 0      & 157.9/18.5  &2481.0  &474     \\
		M2        & 4.59 \textbf{(+4.3\%)}       &6.70         & 0      & 103.5  & 114.8/18.5  &2583.2  &344    \\
		M3        & 4.66/4.83/5.04  &7.11/6.94/6.73    &69      & \tabincell{c}{25/50/80\\(\textbf{Fixed})}  & 137.8/18.5  &2528.6  &413 \\ \hline
		\textbf{PCIM}        &4.95 \textbf{(+12.5\%)}       &6.82         &69      & 67.1  & 137.8/18.5  &2528.6  &413\\
		\hline\hline
	\end{tabular}
\end{table*}

\subsection{Outer-level carbon and ammonia market equilibrium}
\label{sec:outerequilibrium}

\subsubsection{Effectiveness of the carbon incentive mechanism}

Four market mechanisms (M1--M3 and PCIM, detailed in Table~\ref{tab:case}) are compared. The operational outcomes are listed in Table \ref{tab:comparsion1}.

Under M1, which represents the current market without carbon constraints,  GA operates at full capacity, leading to a low ammonia price and limited ReP2A revenue.
Under M2, the carbon cap restricts GA output, increasing both ammonia prices and green ammonia revenue. However, ReP2A revenue improves only modestly by 4.3\%, and the GA utilization rate decreases to 72.7\%, below the typical 90\% \cite{yu2023optimal,yu2024optimal}, which is less desirable both technically and economically.

Although the ammonia revenue of the ReP2A  decreases, carbon trading revenues more than compensate, increasing total ReP2A revenue by 12.5\% and maintaining GA utilization at 87.3\%.
Compared with M1, while the total ammonia revenue of GA and ReP2A decreases from $11.99$ to $11.77 \times 10^7$ CNY (-1.8\%) under PCIM, carbon emissions decrease by 12.9\%, which is an acceptable tradeoff from a societal perspective.

Subsequently, M3 fixes the carbon price.
The results reveal that only ReP2A revenue varies with the carbon price; all the other outcomes remain unchanged.
Fig.~\ref{fig:Carbonpr} shows how the revenues of ReP2A and GA 
change under different carbon prices. We can see that the total ammonia revenue always reaches an optimum despite price changes, implying that regulators can reallocate profit between GA and ReP2A by selecting carbon prices without reducing sectorwide welfare.
If the price is less than 15 CNY/t or greater than 84 CNY/t, one party (either ReP2A or GA stakeholders) loses participation incentives. Under free carbon trading (the case of PCIM), the equilibrium price is 67.1 CNY/t, which lies within the mutually beneficial range [15, 84] CNY/t.

\begin{figure}[t]
	\centering
	\includegraphics[width=3.8in]{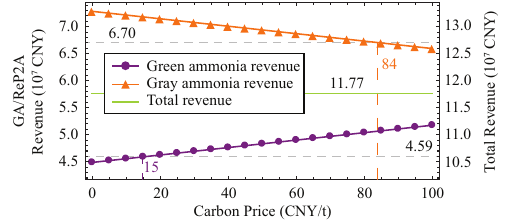}
	\caption{The revenues of green and gray ammonia and the sectorwide total revenue under different fixed carbon prices (case M3).}
	\label{fig:Carbonpr}
\end{figure}

\subsubsection{Ammonia market equilibrium analysis}

\begin{figure}[t]
	\centering
	\includegraphics[width=4.1in]{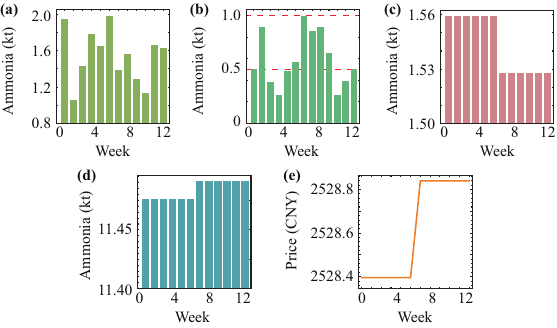}
	\caption{Equilibrium ammonia trading. (a) Green ammonia yield. (b) Green ammonia storage. (c) Green ammonia sales. (d) GA sales. (e) Ammonia price.}
	\label{fig:outer}
\end{figure}

The equilibrium outcomes under the PCIM are shown in Fig.~\ref{fig:outer}. As described in Section~\ref{sec:method}, weekly green ammonia output follows RES availability, and the AST smooths fluctuations to maintain stable sales. GA production operates at its CA-constrained maximum due to its lower cost.
With fixed GA output, the weekly sales of GA and green ammonia adjust to maximize profit under linear price elasticity. When AST cannot fully balance green ammonia sales, GA backfills demand.

Note that despite the positive effect of the PCIM on green ammonia production, incentive misalignment may arise among ReP2A stakeholders, necessitating inner-level analysis and the PCAM for ensuring IR, as discussed in Section \ref{sec:innerequilibrium}.

\subsection{Inner-level electricity and hydrogen trading equilibrium}
\label{sec:innerequilibrium}

\subsubsection{Equilibrium analysis}

\begin{figure}[t]
	\centering
	\includegraphics[width=3.8in]{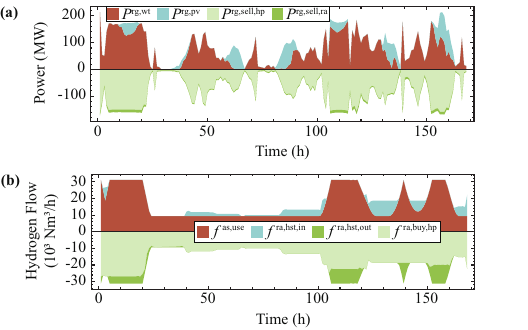}
	\caption{Equilibrium operation of RG/HP/RA in the 7th week under the PCIM. (a) Power generation and load. (b) Hydrogen balance in the ammonia plant}
	\label{fig:inner}
\end{figure}

\begin{figure}[t]
	\centering
	\includegraphics[width=3.8in]{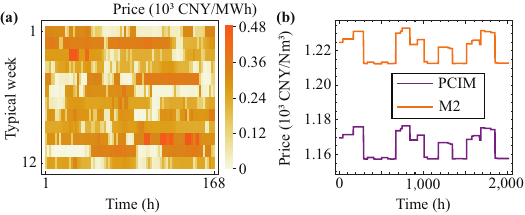}
	\caption{(a) Equilibrium electricity price under the PCIM. (b) Equilibrium hydrogen price under the M2 and PCIM}
	\label{fig:price}
\end{figure}

Fig.~\ref{fig:inner} presents the equilibrium operation of RG, HP, and RA during the 7th week under the PCIM.
Because the ammonia price does not influence weekly ammonia production (as explained in Section~\ref{sec:method}), M2 yields identical results. The trading and operation strategies of RG, HP, and AS follow RES profiles, bridging the gap between the volatility of RES and stable chemical production.

Electricity and hydrogen prices under different incentive mechanisms are shown in Fig.~\ref{fig:price}.
Combining the data in Figs. \ref{fig:price}(a) and \ref{fig:WS}, it is clear that electricity prices vary inversely with RES output.
In Fig.~\ref{fig:price}(b), although ammonia trading remains unchanged, hydrogen prices move with respect to ammonia prices (which is similar to electricity prices). Compared with M2, the PCIM reduces the ammonia price slightly (to $\frac{2528.6}{2583.2}\approx0.98$ times) but decreases the hydrogen price more notably (to $\frac{1.165}{1.22}\approx 0.95$ times), indicating that carbon trading redistributes benefits.

\subsubsection{Necessity of the CA allocation mechanism}

To demonstrate the importance of individually rational CA allocation among the stakeholders in the ReP2A chain, we compare the following CAMs under the PCIM (relative to M2) as follows:
\begin{itemize}
	\item \textbf{PCAM}: the proposed individually rational CAM; see Section \ref{sec:CAAM}.
	\item \textbf{CAM1}: all CA revenues allocated to a single stakeholder (e.g., RA).
	\item \textbf{CAM2}: CA revenues evenly allocated among stakeholders.
\end{itemize}

The revenues of each stakeholder under different CAMs are summarized in Table \ref{tab:caam}.
Carbon trading lowers ammonia prices and electricity and hydrogen LMPs, reducing stakeholders' revenues.
Under the CAM1, concentrating carbon revenue (0.46 $\times$ 10$^7$ CNY) in RA results in losses for RG and HP stakeholders, eliminating their incentive to participate. The same applies when the revenue is allocated solely to others.
CAM2 benefits all parties but disproportionately favors HP and RA; the gain by RG (+0.4\%) is too small relative to the added operational complexity of carbon trading and certification.
In contrast, the PCAM yields balanced improvements for all stakeholders, each receiving at least +5.2\%, thus maintaining high-level willingness to participation.

\begin{table}[t]
	\renewcommand{\arraystretch}{1.3}
	\footnotesize
	\caption{Revenues of each stakeholder under different CAMs}\vspace{6pt}
	\label{tab:caam}
	\centering
	\begin{threeparttable}
		\begin{tabular}{cccccc}
			\hline\hline
			\textbf{Mechanism}      & \textbf{CAM}  & \tabincell{c}{RG revenue (10$^7$ CNY)}      &\tabincell{c}{HP revenue (10$^7$ CNY)}  &\tabincell{c}{RA revenue (10$^7$ CNY)} &\tabincell{c}{Willingness}        \\
			\hline
			M2          &  /                &2.67        & 1.81    &0.11 \footnote      & \\
			\multirow{3}{*}{PCIM} &PCAM         &2.81 (+5.2\%)        &1.91 (+5.5\%)     &0.23 (+109.1\%)       &\tabincell{c}{\ding{51} (high)}      \\
			& CAM1                  &2.53 (--)\footnote        &\tabincell{c}{1.73 (--)}    &\tabincell{c}{0.69 (+)\\{\color{gray}(0.23+0.46)}}  &\ding{55}  \\
			& CAM2                  &\tabincell{c}{2.68 (+0.4\%)\\{\color{gray}(2.53+0.15)}}        &\tabincell{c}{1.88 (+3.9\%)\\{\color{gray}(1.73+0.15)}}     &\tabincell{c}{0.38 (+245.5\%)\\{\color{gray}(0.23+0.15)}}  &\ding{51} (low)               \\
			\hline\hline
		\end{tabular}
		\begin{tablenotes}
			\footnotesize
			\item[1] Due to heterogeneous flexibility, the revenue of the RA is relatively low. This was addressed in our work \cite{zeng2025planning},  so here we focus on only the effectiveness of the PCAM.
			\item[2] The revenues of RG, HP, and RA stakeholders without carbon revenue in PCIM are 2.53, 1.73, and 0.23$\times10^7$ CNY, respectively.
		\end{tablenotes}
	\end{threeparttable}
\end{table}

\begin{table}[t]
	\renewcommand{\arraystretch}{1.3}
	\footnotesize
	\caption{Variations in total ReP2A revenue and stakeholder gains (10$^7$ CNY) under perturbed CA trading volumes}\vspace{6pt}
	\label{tab:incentive}
	\centering
	\begin{threeparttable}
		\begin{tabular}{ccccc}
			\hline\hline
			CA trading (10$^3$ t)      & \tabincell{c}{ReP2A revenue}  & \tabincell{c}{RG revenue}      &\tabincell{c}{HP revenue}  &\tabincell{c}{RA revenue}\\
			\hline
			69 (equlibirum)          &  4.951                &2.816        & 1.909    &0.226        \\
			59         &  4.929                &2.813          &1.907     &0.210        \\
			49        &  4.897                &2.804         &1.901    &0.193     \\
			39		    & 4.855                  &2.788        &1.890     &0.177                 \\
			29		    & 4.802                  &2.766        &1.875     &0.161                 \\
			19		    & 4.738                  &2.738        &1.856     &0.145                 \\
			9		    & 4.664                  &2.703        &1.833     &0.129                 \\
			\hline\hline
		\end{tabular}
	\end{threeparttable}
\end{table}

\subsubsection{Individual Rationality of the PCAM}
\label{sec:incentive}

To test IR, a perturbation analysis is performed. Each participant is assumed to deviate by withholding CA sales, thereby reducing total CA trading. As shown in Table \ref{tab:incentive}, the total ReP2A revenue and all stakeholder revenues decrease monotonically as CA trading volume decreases. Consequently, no participant benefits from deviating, confirming that individual incentives are aligned with the system optimum under the PCAM.

\subsection{Impact of CA cap, green ammonia production capacity and number of GA producers}
\label{sec:sensitivity}

\begin{figure}[t]
	\centering
	\includegraphics[width=3.8in]{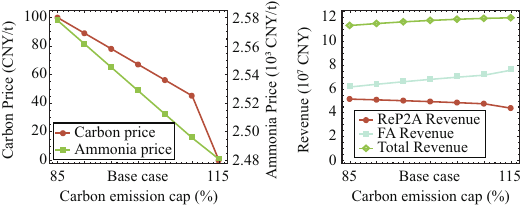}
	\caption{Impact of the carbon-emission cap on ReP2A, GA, total revenue, ammonia and carbon price}
	\label{fig:sensi}\vspace{10pt}
    \includegraphics[width=3.8in]{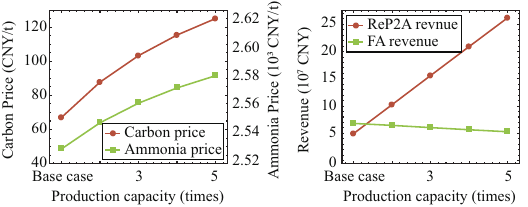}
	\caption{Price and revenue outcomes when green ammonia production capacity increases from 1 to 5 times the base case}
	\label{fig:sensi2}
\end{figure}

\subsubsection{Initial CA cap}

How different carbon caps shape stakeholder revenues and market outcomes is assessed in Fig.~\ref{fig:sensi}.
A tighter cap lowers total revenue but increases environmental performance, offering a reference for regulators seeking to balance the social value of ammonia against carbon-related costs.
As GA output decreases, both ammonia and carbon prices increase, increasing ReP2A revenue.
Because the CA cap affects total ammonia output, the ammonia price, and the carbon price through nearly linear relationships, the cap and the carbon price remain largely linearly correlated.
When the cap becomes too loose and GA production hits its capacity limit, the CAs of ReP2A lose value, and the carbon price falls to zero.
These insights provide practical guidance for setting and allocating CAs.

\subsubsection{Capacity of green ammonia production}

With the ongoing expansion of the ammonia industry, new capacity is expected to shift toward green ammonia. Equilibrium outcomes as green ammonia capacity increases from one to five times the base case are examined in Fig.~\ref{fig:sensi2}.
Because ReP2A facilities operate at lower utilization levels than does GA, a larger green ammonia share increases both the ammonia price and the carbon price.
Fig.~\ref{fig:sensi2} also shows declining GA revenue, reflecting a higher effective carbon cost than in that of the base case.
Under the proposed CA allocation and game framework, green ammonia therefore gains a stronger competitive position as its capacity expands and carbon prices continue to increase.

\begin{figure}[t]
	\centering
	\includegraphics[width=3.8in]{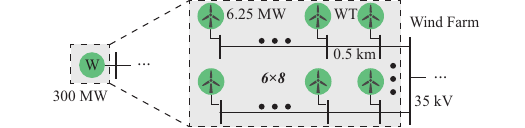}
	\caption{The detailed topology of the wind farm in a real-world ReP2A project}
	\label{fig:largecase}
\end{figure}

\subsubsection{Number of GA producers}

To examine the impact of multiple GA producers in the regional ammonia market, three producers, denoted as GA1, GA2, and GA3, are assumed, each with a production capacity of 26.1 t/h, with all other parameters unchanged. The operational results with three GA producers participating in CA transactions are shown in Table \ref{tab:3GA}. It can be observed that a prisoner's dilemma arises, where if any GA producer chooses not to participate in CA transactions, its revenue decreases, while if none of the three participates in CA transactions, their revenues are jointly maximized. However, when the three act jointly (i.e., PCIM in Table \ref{tab:comparsion1}) and participate in CA transactions, the average revenues becomes higher ($\frac{6.82}{3}\approx2.27 \times 10^7$ CNY). Therefore, GA producers are encouraged to form internal coalitions, which can enhance their own payoffs while promoting carbon trading.

\begin{table}[tb]
	\renewcommand{\arraystretch}{1.5}
	\footnotesize
	\caption{Operational results with three GA producers participating in CA transactions}\vspace{6pt}
	\label{tab:3GA}
	\centering
	\begin{tabular}{cccccc}
		\hline\hline
		\tabincell{c}{Participants in\\CA transactions}   & \tabincell{c}{ReP2A revenue\\(10$^7$ CNY)} & \tabincell{c}{GA1/GA2/GA3\\revenue (10$^7$ CNY)}         &\tabincell{c}{CA price\\(CNY/t)}   &\tabincell{c}{Gray/green ammonia   \\yield (10$^3$ t)}  &\tabincell{c}{Average ammonia\\ price (CNY/t)}  \\
		\hline
		No GA producers   & 4.59       &2.23/2.23/2.23         & 0                &114.8/18.5   &2583.2       \\
		GA2, GA3          & 5.43       &2.02/2.16/2.16         & 136.8             & 137.8/18.5  &2528.6      \\
		GA3               & 4.53       &2.10/2.10/2.89         &0                  & 129.2/18.5  &2549.1   \\
		\hline
		\tabincell{c}{GA1, GA2, GA3}        &5.45        &2.11/2.11/2.11    &139.8       & 137.8/18.5  &2528.6  \\
		\hline\hline
	\end{tabular}\vspace{4pt}
\end{table}

\subsection{Computational performance}
\label{sec:computation}

\subsubsection{Scalability}

The solution process includes 12 SPs of ammonia production, the outer equilibrium problem (\ref{eq:probfinal}), and the inner equilibrium problem (\ref{eq:problem}) (temporally decoupled into 12 parts). The solution time for each stage is shown in Table \ref{tab:time}. Since the SPs and (\ref{eq:problem}) are both LPs, and (\ref{eq:probfinal}) is a small-scale MILP, all can be solved efficiently and are scalable. Then, the detailed topology of the wind farm (as shown in Fig. \ref{fig:largecase}) is incorporated to extend the 9-bus system to a 57-bus system (real-world ReP2A projects do not feature complex topologies), thereby increasing the computational complexity for testing. The total computation time is about 913.1 s, validating the scalability of the proposed solution approach. Moreover, when multiple ReP2A and GA producers are considered, the inner problems are not coupled and can be solved in parallel, while the outer problem remains computationally tractable, ensuring overall computational efficiency.

\begin{table}[t]
	\renewcommand{\arraystretch}{1.3}\footnotesize
	\caption{Solution Time (s) of the hierarchical game model}\vspace{6pt}
	\label{tab:time}
	\centering
	\begin{tabular}{ccccc}
		\hline\hline
		Scale         & \tabincell{c}{12 ammonia\\production SPs}     & \tabincell{c}{Outer\\equilibrium}    &\tabincell{c}{Inner\\equilibrium}  &Total        \\
		\hline
		9-bus          &7$\times$12       &1.1      & 10$\times$12    &205.1 \\
		57-bus         &33$\times$12      &1.1      & 43$\times$12    &913.1      \\
		\hline\hline
	\end{tabular}
\end{table}

\begin{table}[t]
	\renewcommand{\arraystretch}{1.5}\footnotesize
	\caption{Revenues of ReP2A and GA under different objectives in the equilibrium solution problem (Eq. (\ref{eq:probfinal}))}\vspace{6pt}
	\label{tab:stability}
	\centering
	\begin{tabular}{cccc}
		\hline\hline
		Objective(10$^7$ CNY)         & 1                 & \tabincell{c}{Maximizing green ammonia\\and gray ammonia revenues}    &\tabincell{c}{Minimizing green ammonia\\and ray ammonia revenues}          \\
		\hline
		\tabincell{c}{Green ammonia revenue}          &4.95       &4.95      & 4.95  \\
		\tabincell{c}{Gray ammonia revenue}         & 6.82         &6.82       & 6.82       \\
		\hline\hline
	\end{tabular}
\end{table}

\subsubsection{Solution stability}
\label{sec:solutionstability}

Since the equivalent optimization problem corresponding to the inner Nash equilibrium has a unique solution \cite{rosen1965existence}, the outer Nash-Cournot equilibrium is also unique \cite{metzler2003nash}, making the solution of problem (\ref{eq:probfinal}) theoretically unique. To verify this, we obtain the equilibrium by using different objective functions, and the results are given in Table \ref{tab:stability}. It can be seen that the proposed hierarchical game yields a stable equilibrium and provides valuable insights.

\section{Conclusions}
\label{sec:conclusions}

In this work, carbon transactions are incorporated into the competition between green and gray ammonia producers and the interactions among ReP2A stakeholders are modeled through a hierarchical game. The carbon market design improves the market position of green ammonia, while the CA allocation mechanism maintains individual rationality within the ReP2A system. The main conclusions and policy implications are listed as follows:

\begin{enumerate}
	\item A tighter CA cap lowers total ammonia revenue but increases environmental performance to a much greater extent, offering regulators a benchmark for weighing the social value of ammonia against its carbon cost. Under a fixed CA quota, total revenue is maximized under the PCIM.
	
	\item Because carbon pricing does not change the equilibrium of other markets, regulators may adjust the carbon price to shift profits between the ReP2A and GA systems. However, higher carbon prices do not always benefit green ammonia and may suppress transactions. The feasible carbon price interval identified here provides actionable guidance for policy design.
	
	\item The greater overall profitability of ReP2A does not guarantee greater returns for each entity along the process chain. Carbon trading reshapes ammonia prices, which then influence the equilibrium of electricity and hydrogen markets.
	
	\item Effective carbon-revenue allocation is critical for sustaining incentives. Equal or centralized allocations fail to ensure universal benefits, whereas the PCAM supports balanced incentives and stronger participation.
\end{enumerate}

Future work will investigate the dynamic transition from gray to low-carbon ammonia and design multistage, multi-type market and policy frameworks that accelerate long-term decarbonization in the energy, chemical, and maritime sectors.

\section*{Acknowledgement}

The authors gratefully acknowledge the financial support came from the National Natural Science Foundation of China (52377116, 72103167 and 52577129).

\section*{Declaration of Interest}
None.

\section*{Data Availability}

The data related to this work are available upon request.



\end{document}